\documentclass[12pt, reqno]{amsart}

\usepackage{amsmath, amsthm, amssymb, enumerate, amscd}

\theoremstyle{plain}
\newtheorem{theorem}{Theorem}[section]
\newtheorem{proposition}[theorem]{Proposition}
\newtheorem{corollary}[theorem]{Corollary}

\theoremstyle{definition}
\newtheorem{definition}[theorem]{Definition}

\newtheorem{remark}[theorem]{Remark}

\newtheorem{summary}[theorem]{Summary}

\newtheorem{examples}[theorem]{Examples}

\newcommand{\F}{{\mathbb{F}}}

\newcommand{\CC}{{\mathcal{C}}}
\newcommand{\DC}{{\mathcal{D}}}
\newcommand{\PC}{{\mathcal{P}}}

\newcommand{\um}{{\mathbf{u}}}
\newcommand{\vm}{{\mathbf{v}}}
\newcommand{\wm}{{\mathbf{w}}}

\begin{document}

\title[Trunks and morphisms of neural codes] {Some remarks about trunks and morphisms of neural codes}

\author{Katie Christensen}
\address{Department of Mathematics\\ 
University of Louisville\\
Louisville, KY 40292, USA}
\email{katie.christensen@louisville.edu}
\thanks{$\dag$ the corresponding author}
\author{Hamid Kulosman$^\dag$}
\address{Department of Mathematics\\ 
University of Louisville\\
Louisville, KY 40292, USA}
\email{hamid.kulosman@louisville.edu}

\subjclass[2010]{Primary 13B10, 13B25, 13P25; Se\-con\-dary 92B05, 94B60}

\keywords{Neural code; Neural ring; Morphisms between neural codes and rings; Isomorphisms between neural codes and rings; Trunk; Monomial map; Linear monomial map.}

\date{}

\begin{abstract} 
We give intrinsic characterizations of neural rings and homomorphisms between them. Also we introduce the notion of a basic monomial code map and characterize monomial code maps as compositions of basic monomial code maps. Finally, we characterize monomial isomorphisms between neural codes. Our work is based on the 2015 paper by C.~Curto and N.~Youngs about neural ring homomorphisms and maps between neural codes and on the 2018 paper by R.~Amzi Jeffs about morphisms of neural rings.
\end{abstract}

\maketitle

\section{Introduction}

The neural rings and ideals as an algebraic tool for analyzing the intrinsic structure of neural codes were introduced by C.~Curto et al. in 2013 in the pioneering paper \cite{civy}. In order to make our paper self-contained, we will give in this section some definitions and facts from \cite{civy}. All other notions and facts, that we assume to be known, can be found either in \cite{civy} or \cite{cy}, or in the standard references \cite{am} and \cite{cls}. The notions from the theory of categories, that we use in this paper, can be found either in \cite{a}, or in the standard reference \cite{j}.

\medskip
An element $\wm=w_1\dots w_n$ of $\F_2^n$ is called a {\it code word}, shortly {\it word}, of length $n$.
A set $\mathcal{C}\subseteq \F_2^n$ is called a {\it neural code}, shortly {\it code}, of length $n$, or on $n$ neurons.
Here $n\ge 0$. In the case $n=0$ we have only one code on $0$ neurons. We denote it by $\CC^{^{\textvisiblespace}}$. Its only element is the empty word, denoted by \textvisiblespace\,. (We assume that $\F_2^0=\{\textvisiblespace\,\}$.) We need this code so that the deletion of neurons, that we are going to consider later, is well-defined.

For a code $\CC\subseteq \F_2^n$ we define the {\it ideal of $\CC$}, $\mathcal{I}(\CC)\subseteq \F_2[X_1, \dots, X_n]$,  in the following way: 
\begin{equation*}
\mathcal{I}(\CC)=\{f\in \F_2[X_1,\dots, X_n]\;:\: f(\wm)=0 \text{ for all } \wm\in \CC\}.
\end{equation*}

Note that for any code $\CC$ in $\F_2^n$ we have $\mathcal{I}(\CC)\supseteq \mathcal{B}$, where $\mathcal{B}=(X_1^2-X_1, \dots, X_n^2-X_n)$ is the {\it Boolean ideal} of $\F_2[X_1, \dots, X_n]$. Moreover, for $\CC\subseteq \F_2^n$ we have $\mathcal{I}(\CC)=\mathcal{B}$ if and only if $\CC$ is the {\it full code} on $n$ neurons, i.e., $\CC=\F_2^n$.

For any $\wm=w_1w_2\dots w_n\in \F_2^n$ we define the {\it Lagrange polynomial} of $\wm$, $L_\wm\in \F_2[X_1,\dots, X_n]$, in the following way:
\begin{equation*}
L_\wm=\prod_{w_i=1} X_i \prod_{w_j=0} (1-X_j).
\end{equation*}
Note that $L_\wm(\wm)=1$ and $L_\wm(\vm)=0$ for any $\vm\ne \wm$ from $\F_2^n$. We have
\begin{equation*}
\mathcal{I}(\CC)=(\{L_\wm\;:\;\wm\notin\CC\}\cup\{X_i^2-X_i\;:\;i=1, \dots, n\}).
\end{equation*}

The {\it neural ring} of $\CC\subseteq \F_2^n$ is defined to be the ring
\[R_\CC=\frac{\F_2[X_1, \dots, X_n]}{\mathcal{I}(\CC)}=\F_2[x_1, \dots, x_n],\]
where $x_i=X_i+\mathcal{I}(\CC)$ for $i=1,2,\dots, n$. 
For any $f\in \F_2[X_1,\dots, X_n]$ we will denote  $\overline{f}=f+\mathcal{I}(\CC)=f(x_1,\dots, x_n)$, but we will also use $\overline{f}=\overline{f}(x_1,\dots, x_n)$. 
In particular, the image of the Lagrange polynomial $L_\wm$ is denoted by $\overline{L_\wm}$ or $\overline{L_\wm}(x_1, \dots, x_n)$.
For $A\subseteq\CC$ we denote by $L_A$ the polynomial $\sum_{\wm\in A} L_\wm$. It turns out that $R_\CC$ consists of all $\overline{L_A}$, $A\subseteq \CC$, and that they are all distinct. Moreover, if we denote by $\PC(\CC)$ the power set of $\CC$, then the bijection $R_\CC\to (\PC(\CC), \triangle, \cap)$, given by 
\[\overline{L_A}\mapsto A,\]
is a ring isomorphism. For the purpose of this paper we call the ring $(\PC(\CC),\triangle, \cap)$ as well the {\it neural ring} of $\CC$. It will always be clear from the context which version of the {\it neural ring} of $\CC$ we are using.

%---------------------------------

\section{An intrinsic characterization of neural rings and morphisms between them}

\begin{definition}[{\cite{a}}]
Let $\CC\subseteq \F_2^n$ be a code of length $n$ and $\alpha\subseteq [n]$. Then the subset of $\CC$
\[\mathrm{Tk}_\alpha^\CC=\{\wm=w_1w_2\dots w_n\in\CC\;|\;w_i=1 \text{ for all $i\in\alpha$}\}\]
is called the {\it trunk} of $\CC$ determined by $\alpha$. In particular, $\mathrm{Tk}_\emptyset^\CC=\CC$. If $|\alpha|=1$, $\mathrm{Tk}_\alpha^\CC$ is called a {\it simple trunk} of $\CC$.
\end{definition}

We will usually write $\mathrm{Tk}_i^\CC$ instead of $\mathrm{Tk}_{\{i\}}^\CC$.

\medskip
In the next theorem we give an intrinsic characterization of neural rings. The inspiration for this theorem is coming from \cite[Theorem 1.2]{cy}, where neural rings on $n$ neurons (as modules) were characterized in terms of the actions of the neural ring of the full code on $n$ neurons on them. The part of our proof in which we construct the code $\CC$ follows the proof of Theorem 1.2 from \cite{cy}.

\begin{theorem}
A non-zero commutative ring $R$ is isomorphic to the neural ring of some neural code $\CC$ if and only if there is a subset  $S=\{s_1, s_2,\dots, s_r\}$ $(r\ge 1)$ of $R$ and a sequence $T=t_1, t_2,\dots, t_n$ $(n\ge 1)$ of elements of $R$ such that the following conditions hold:

(N1) Every element $x\in R$ can be uniquely written as a sum $x=s_{j_1}+s_{j_2}+\dots+s_{j_p}$  $(p\ge 0)$ of distinct elements of $S$.

(N2) For any $t_i$ from $T$ and any $s_j\in S$, $t_is_j\in\{0,s_j\}$.

(N3) For any two distinct elements $s_j, s_k\in S$ there is at least one element $t_i$ from $T$ such that exactly one of the elements $t_is_j$, $t_is_k$ is equal to $0$.

Moreover, given a non-zero commutative ring $R$ with the properties (N1), (N2), (N3) satisfied by its subset $S$ and a sequence of its elements $T$, the code $\CC$ and the isomorphism $\phi:R\to \PC(\CC)$ can be selected in such a way that the elements of $S$ correspond to the words of $\CC$ (as singletons) and the elements of $T$ to the simple trunks of $\CC$.
\end{theorem}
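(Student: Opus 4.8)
The plan is to prove both directions by passing to the concrete model $(\PC(\CC),\triangle,\cap)$ of the neural ring.

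\emph{Necessity.} Suppose $R\cong R_\CC$ for some code $\CC$. Since $R\ne0$ we have $\CC\ne\emptyset$; and as the unique code on $0$ neurons has neural ring $\F_2\cong R_{\{1\}}$ with $\{1\}\subseteq\F_2^1$, we may assume $\CC\subseteq\F_2^n$ with $n\ge1$ and $r:=|\CC|\ge1$. Conditions (N1)--(N3) are invariant under ring isomorphism, so it suffices to exhibit $S$ and $T$ in $(\PC(\CC),\triangle,\cap)$: take $S$ to be the set of singletons $\{\wm\}$, $\wm\in\CC$, and $T=\mathrm{Tk}_1^\CC,\dots,\mathrm{Tk}_n^\CC$. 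Then (N1) says every $A\subseteq\CC$ is uniquely the symmetric difference of its one-element subsets; (N2) holds because $\mathrm{Tk}_i^\CC\cap\{\wm\}$ equals $\{\wm\}$ when $w_i=1$ and $\emptyset$ otherwise; and (N3) holds because two distinct words differ in some coordinate $i$, where exactly one of them lies in $\mathrm{Tk}_i^\CC$.

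\emph{Sufficiency.} Let $R$ satisfy (N1)--(N3). First, $s_j\ne0$ for each $j$, since otherwise $\{s_j\}$ and $\emptyset$ would be two subsets of $S$ with the same sum, contradicting (N1). Using (N2), define $w^{(j)}_i\in\{0,1\}$ by $w^{(j)}_i=1\iff t_is_j=s_j$, set $\wm^{(j)}=w^{(j)}_1\cdots w^{(j)}_n$, and let $\CC=\{\wm^{(1)},\dots,\wm^{(r)}\}\subseteq\F_2^n$; by (N3) these words are pairwise distinct, so $|\CC|=r$. The heart of the proof is the ``selector'' identity
\[
L_{\wm^{(j)}}(t_1,\dots,t_n)\,s_k=\begin{cases}s_k,&j=k,\\0,&j\ne k,\end{cases}
\]
where $L_{\wm^{(j)}}(t_1,\dots,t_n)$ denotes the element of $R$ obtained by substituting $t_i$ for $X_i$ in the Lagrange polynomial. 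One proves it by expanding $L_{\wm^{(j)}}$ as a product of $n$ factors --- $t_i$ for those $i$ with $w^{(j)}_i=1$, and $1-t_i$ for those with $w^{(j)}_i=0$ --- and applying them to $s_k$ one factor at a time: by (N2) each factor sends $s_k$ to $s_k$ or to $0$, with $t_is_k=s_k$ iff $w^{(k)}_i=1$ and $(1-t_i)s_k=s_k$ iff $w^{(k)}_i=0$ (using only $s_k-s_k=0$), so the whole product equals $s_k$ precisely when $w^{(j)}_i=w^{(k)}_i$ for all $i$, i.e. when $j=k$, and equals $0$ otherwise.

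From here the ring structure of $R$ unwinds. Writing an arbitrary $x=\sum_{j\in J}s_j$ by (N1), the identity gives $L_{\wm^{(k)}}(t_1,\dots,t_n)\,x=s_k$ if $k\in J$ and $0$ if $k\notin J$; since each $s_k\ne0$, this yields the test: $x=0$ iff $L_{\wm^{(k)}}(t_1,\dots,t_n)\,x=0$ for all $k$. Applying the test to $s_l+s_l$ gives $2s_l=0$, so $R$ has characteristic $2$ and, by (N1), is the $\F_2$-span of $S$; applying it to $s_js_k$ with $j\ne k$ gives $s_js_k=0$; and then, writing $1=\sum_{j\in J_0}s_j$, the element $s_j=s_j\cdot 1=\sum_{j'\in J_0}s_{j'}s_j$ equals $s_j^2$ if $j\in J_0$ and $0$ otherwise, which forces $j\in J_0$ for every $j$, so $1=s_1+\dots+s_r$ and $s_j^2=s_j$. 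Thus the $s_j$ are pairwise orthogonal idempotents summing to $1$. Define $\phi:R\to(\PC(\CC),\triangle,\cap)$ by $\phi\bigl(\sum_{j\in J}s_j\bigr)=\{\wm^{(j)}:j\in J\}$; by (N1) and the distinctness of the $\wm^{(j)}$ it is a well-defined bijection, and the orthogonal-idempotent relations together with $2s_l=0$ make it respect $+$ and $\cdot$ and send $1$ to $\CC$, hence a ring isomorphism.

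Finally, $\phi(s_j)=\{\wm^{(j)}\}$ is a singleton word of $\CC$, and from $t_i=t_i\cdot1=\sum_j t_is_j=\sum_{j:\,w^{(j)}_i=1}s_j$ we get $\phi(t_i)=\{\wm\in\CC:w_i=1\}=\mathrm{Tk}_i^\CC$, a simple trunk, which is exactly the asserted correspondence. The step I expect to be the main obstacle is this middle one --- distilling the orthogonal-idempotent description of $R$ out of the purely combinatorial hypotheses (N1)--(N3) --- and the selector identity is the device that reduces it to the handful of short verifications above.
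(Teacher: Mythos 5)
Your proof is correct and takes a genuinely different route from the paper's. Both proofs do the necessity direction the same way (singletons for $S$, simple trunks for $T$), though you are slightly more careful: you note that the empty-length code can be swapped for $\{1\}\subseteq\F_2^1$ with the same neural ring, which discharges the theorem's requirement that $n\ge 1$, a point the paper glosses over. The interesting divergence is in sufficiency. The paper proceeds by a chain of five small claims (no $s=0$; $1\in S\Rightarrow S=\{1\}$; pairwise products vanish; idempotency; characteristic $2$), with a separate hand-made argument for each and a case split on whether $S=\{1\}$, and only then builds the code and checks the map is a homomorphism. You instead build the code first from (N2) and then prove a single ``selector identity,'' $L_{\wm^{(j)}}(t_1,\dots,t_n)\,s_k=\delta_{jk}\,s_k$, by factoring the Lagrange polynomial and tracking the action of each factor $t_i$ or $1-t_i$ on $s_k$. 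This one identity is then used as a universal instrument: it yields orthogonality ($s_js_k=0$), the partition of unity ($1=s_1+\dots+s_r$, hence idempotency), and characteristic $2$, all by short comparisons, and it works uniformly whether $|S|=1$ or $|S|\ge 2$, so no case split is needed. The trade-off is that your approach front-loads the code construction and one nontrivial algebraic lemma, whereas the paper's is more elementary step-by-step. One small wrinkle in your write-up: the way the ``test'' ($x=0$ iff all $L_{\wm^{(k)}}x=0$) is phrased, it doesn't literally apply to $x=2s_l$ because $L_{\wm^{(l)}}(2s_l)=2s_l$ is exactly what you want to show vanishes. What you actually use there is a comparison: write $2s_l$ in its (N1) form $\sum_{j\in J'}s_j$, apply the selector identity to that form, compare with $2L_{\wm^{(k)}}(s_l)$, and conclude $J'\subseteq\{l\}$ and then $J'=\emptyset$ since $J'=\{l\}$ would force $s_l=0$. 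This is a one-line fix, and the remainder of your argument (including the final identifications $\phi(s_j)=\{\wm^{(j)}\}$ and $\phi(t_i)=\mathrm{Tk}_i^\CC$) is sound.
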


\begin{proof}
Let $\CC$ be a neural code on $n$ neurons, consisting of $r$ codewords $\wm^1, \wm^2, \dots, \wm^r$, and let $(\PC(\CC), \triangle, \cap)$ be its neural ring. Let $s_j=\{\wm^j\}$ $(j=1,2,\dots, r)$, $S=\{s_1, s_2,\dots, s_r\}$, $t_i=\mathrm{Tk}_i^\CC$ $(i=1,2,\dots, n)$, $T=t_1, t_2,\dots, t_n$. Then for each $X=\{\wm^{j_1}, \wm^{j_2}, \dots, \wm^{j_p}\}\in \PC(\CC)$ the unique way to write $X$ as a ``sum" (i.e., symmetric difference) of elements $s_j$ is $X=\{\wm^{j_1}\}\triangle \{\wm^{j_2}\}\triangle \dots \triangle \{\wm^{j_p}\}$. Thus the condition (N1) holds for $\PC(\CC)$. Also for each $i\in[n]$ and $j\in [r]$ we have $\mathrm{Tk}_i^\CC\cap \{\wm^j\}\in \{\emptyset, \{\wm^j\}\}$, so that the condition (N2) holds for $\PC(\CC)$. Finally, let $\{\wm^j\}$, $\{\wm^k\}$ be two distinct elements of $S$. Let $i$ be a coordinate on which one of $\wm^j$, $\wm^k$ has $0$ and the other one $1$. Then exactly one of $\mathrm{Tk}_i^\CC\cap \{\wm^j\}$, $\mathrm{Tk}_i^\CC\cap \{\wm^k\}$ is $\emptyset$. Thus the condition (N3) holds for $\PC(\CC)$.

Conversely, suppose that we have a non-zero commutative ring $R$ which has a subset $S$ and a sequence of its elements $T$ satisfying the conditions (N1), (N2), and (N3).

\smallskip\noindent
\underbar{Claim 1.} No element of $S$ is equal to $0$.

\noindent
\underbar{Proof.} Suppose $0\in S$. If $S=\{0\}$, then, by (N1), $R=\{0\}$, a contradiction. Suppose $S\ne\{0\}$ and let $s\ne 0$ be a non-zero element of $S$. Then $s$ and $s+0$ are two different ways to write an element of $R$ as a sum of distinct elements of $S$, a contradiction. \underbar{Claim 1 is proved.}

\smallskip\noindent
\underbar{Claim 2.} If $1\in S$, then $S=\{1\}$.

\noindent
\underbar{Proof.} Suppose $1\in S$ and $S\ne \{1\}$. Let $s\in S$, $s\ne 1$. Then, by (N3), there is a $t$ from the sequence $T$ such that exactly one $t1$, $ts$ is equal to $0$. If $t1=0$, then, by (N2) and (N3), $ts=s$. However, $t1=0$ implies $t=0$, hence $ts=0$. Hence $s=0$, contradicting Claim 1. The other option is that $ts=0$. Then, by (N2) and (N3), $t1=1$, hence $t=1$, hence $0=ts=s$, again contradicting Claim 1. \underbar{Claim 2 is proved.}

\smallskip\noindent
\underbar{Proof for the case $S=\{1\}$.} Suppose $S=\{1\}$. Then $R=\{0,1\}$. Hence each $t_i$ is either $0$ or $1$. We form a codeword $\wm=w_1w_2\dots w_n\in\F_2^n$ in the following way: if $t_i=0$, we put $w_i=0$, and if $t_i=1$, we put $w_i=1$. Let $\CC=\{\wm\}$. Then $\PC(\CC)=\{\emptyset, \{\wm\}=\CC\}$. The map $\phi:R\to \PC(\CC)$, defined by $\phi(0)=\emptyset$, $\phi(1)=\CC$, is a ring isomorphism. We also have $\phi(t_i)=\emptyset=\mathrm{Tk}_i^\CC$ if $t_i=0$, and $\phi(t_i)=\CC=\mathrm{Tk}_i^\CC$ if $t_i=1$. \underbar{The proof for the case $S=\{1\}$ is finished.}

\smallskip
From now on we assume that $1\notin S$. Equivalently, $|S|\ge 2$ (due to Claim 2 and the fact that $1$ is representable as a sum of distinct elements of $S$).

\smallskip\noindent
\underbar{Claim 3.} For any two distinct elements $s_j, s_k\in S$, $s_js_k=0$.

\noindent
\underbar{Proof.} Let $s_j, s_k$ be two distinct elements of $S$. By (N3) there is an element $t_i$ from $T$ such that exactly one of the elements $t_is_j$, $t_is_k$ is $0$. Say $t_is_j=0$. Then, by (N2) and (N3), $t_is_k=s_k$. Now $t_is_js_k=(t_is_j)s_k=0s_k=0$, and $t_is_js_k=s_j(t_is_k)=s_js_k$. Hence $s_js_k=0$. \underbar{Claim 3 is proved.}

\smallskip\noindent
\underbar{Claim 4.} For any element $s_j\in S$, $s_js_j=s_j$.

\noindent
\underbar{Proof.} Let $1=s_{j_1}+\dots+s_{j_p}$ $(p\ge 2)$ be the unique representation of $1$ as a sum of distinct elements of $S$. If $p<|S|$, then there is an $s_j\in S$ not participating in the representation of $1$. Multiplying the representation of $1$ by $s_j$ and using Claim 3, we get $s_j=0$, contradicting to Claim 1. Hence $p=|S|$, i.e., $1=s_1+s_2+\dots+s_r$. Now for any $j\in [r]$, when we multiply this representation of $1$ by $s_j$, we get (using Claim 3) that $s_j=s_js_j$. \underbar{Claim 4 is proved.}

\smallskip\noindent
\underbar{Claim 5.} For any element $s_j\in S$, $s_j+s_j=0$.

\noindent
\underbar{Proof.} Note that $s_j+s_j\ne s_j$, otherwise, by cancellation, $s_j=0$, contradicting Claim 1. Suppose that $s_j+s_j=s_j+s_{j_1}+\dots+s_{j_p}$ with $p\ge 1$ and all $s_{j_\mu}$ $(\mu=1,2,\dots, p)$ different than $s_j$. Cancelling $s_j$ we get $s_j=s_{j_1}+\dots+s_{j_p}$, contradicting to (N1). Suppose now that $s_j+s_j=s_{j_1}+\dots+s_{j_p}$ with $p\ge 1$ and all $s_{j_\mu}$ $(\mu=1,2,\dots, p)$ different than $s_j$. If we multiply this equality by $s_{j_1}$ and use the claims 3 and 4, we get $s_{j_1}=0$, contradicting Claim 1. The only remaining option is $s_j+s_j=0$. \underbar{Claim 5 is proved.} 

\smallskip\noindent
\underline{Proof for the case $S\ne\{1\}$ (i.e., $|S|\ge 2$).} For every element $s\in S$ we construct  a word $\wm=w_1w_2\dots w_n\in\F_2^n$ in the following way: for $i=1,2,\dots, n$, if $t_is=0$ we put $w_i=0$, otherwise (if $t_is=s$) we put $w_i=1$. In that way we get $r$ words $\wm^1, \wm^2,\dots, \wm^r$ from $\F_2^n$, corresponding, respectively, to $s_1, s_2, \dots, s_r$. Let $\CC=\{\wm^1, \wm^2,\dots, \wm^r\}$.  
For every $x\in R$, if $x=s_{j_1}+\dots+s_{j_p}$ is the unique representation of $x$ as a sum of distinct elements of $S$, we define
\begin{align*}
S(x)&=\{s_{j_1}, s_{j_2},\dots, s_{j_p}\}\subseteq R,\\
W(x)&=\{\wm^{j_1}, \wm^{j_2},\dots, \wm^{j_p}\}\subseteq \CC.
\end{align*}
Note that for any $x,y\in R$ we have
\[S(x+y)=S(x)\triangle S(y)\]
due to Claim 5, and
\[S(xy)=S(x)\cap S(y)\]
due to the claims 3 and 4. Hence
\begin{align}
W(x+y)&=W(x)\triangle W(y),\label{eq1}\\
W(xy)&=W(x)\cap W(y). \label{eq2}
\end{align}
Note also that if $x=0$, $S(x)=\emptyset$, hence
\begin{equation*}
W(0)=\emptyset,
\end{equation*}
and if $x=1$, $S(x)=S$ by the proof of Claim 4, hence
\begin{equation}
W(1)=\CC.\label{eq3}
\end{equation}
Now we define a map $\phi:R\to\CC$ as $\phi(x)=W(x)$ for any $x\in R$. The relations (\ref{eq1}), (\ref{eq2}), and (\ref{eq3}) show that $\phi$ is a ring homomorphism. Also
\begin{equation*}
\phi(s_j)=\{\wm^j\} \text{ for every $j\in [n]$}.
\end{equation*}
It remains to find $\phi(t_i)$ for each $i\in[n]$. Fix an $i\in[n]$. Let $t_i=s_{j_1}+s_{j_2}+\dots+s_{j_p}$ $(p\ge 0)$ be the unique representation of the element $t_i$ as a sum of distinct element of $S$. Multiplying this representation by $s_{j_\mu}$ $(\mu\in[p])$ and using the claims 3 and 4 we conclude that 
\begin{equation}
t_is_{j_\mu}=s_{j_\mu} \; (\mu=1,2,\dots, p).\label{eq4}
\end{equation}
We claim that 
\begin{equation}
t_is_j=0 \text{ for any $s_j\in S\setminus\{s_{j_1},\dots, s_{j_p}\}$.}\label{eq5}
\end{equation}
Suppose to the contrary, i.e., $t_is_j=s_j$ for some $s_j\in S\setminus\{s_{j_1},\dots, s_{j_p}\}$. Then, by Claim 3,
$s_j=t_is_j=(s_{j_1}+s_{j_2}+\dots+s_{j_p})s_j=0$,
a contradiction. Thus 
\[\phi(t_i)=\{\wm^{j_1},\dots, \wm^{j_p}\},\]
which is precisely the set of all the words from $\CC$ that have the $i$-th coordinate equal to $1$ (due to (\ref{eq4}), (\ref{eq5}), and the way the code $\CC$ is constructed). Thus
\[\phi(t_i)=\mathrm{Tk}_i^\CC \,\text{ for all $i\in[n]$.}\qedhere\]
\end{proof}

\bigskip
Next we give an intrinsic characterization of homomorphisms bet\-ween neural rings.

\begin{theorem}\label{thm_char_hom}
Let $\CC, \DC$ be two codes and let $\phi: (\PC(\DC),\triangle,\cap)\to (\PC(\CC),\triangle,\cap)$ be a map between these two rings. The following are equi\-va\-lent:

(i) $\phi$ is a ring homomorphism;

(ii) $\phi$ satisfies the following three conditions:

\quad\quad (H1) $\phi(\{\vm^1\})\cap \phi(\{\vm^2\})=\emptyset$ for any $\vm^1, \vm^2\in\DC$;

\quad\quad (H2) $(\forall B\subseteq \DC) \; \phi(B)=\cup_{\vm\in B}\, \phi(\{\vm\})$;

\quad\quad (H3) $\phi(\DC)=\CC$;

(iii) $\phi=q^{-1}$ for some map $q:\CC\to\DC$. 
\end{theorem}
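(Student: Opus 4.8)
The plan is to prove the cyclic chain of implications (i) $\Rightarrow$ (ii) $\Rightarrow$ (iii) $\Rightarrow$ (i), using throughout the elementary structure of the ring $(\PC(\CC),\triangle,\cap)$: its additive identity is $\emptyset$, its multiplicative identity is $\CC$ itself, the multiplication $\cap$ is idempotent, and every $B\subseteq\DC$ is the symmetric difference (in fact a disjoint union) of its one-element subsets. I would also read (H1) as referring to \emph{distinct} $\vm^1,\vm^2$, which is the only case with content since $\phi(\{\vm\})\cap\phi(\{\vm\})=\phi(\{\vm\})$ holds automatically; and I would keep in mind that ``ring homomorphism'' here means a unital one, as is standard in this setting.

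For (i) $\Rightarrow$ (ii): assuming $\phi$ is a ring homomorphism, (H1) follows by applying $\phi$ to the equality $\{\vm^1\}\cap\{\vm^2\}=\emptyset$ for distinct $\vm^1,\vm^2$; (H2) follows by writing $B=\triangle_{\vm\in B}\{\vm\}$, applying the homomorphism $\phi$ to obtain $\phi(B)=\triangle_{\vm\in B}\phi(\{\vm\})$, and noting that by (H1) the sets $\phi(\{\vm\})$ are pairwise disjoint, so their symmetric difference equals their union; and (H3) is just the statement that $\phi$ sends the unit $\DC$ of $\PC(\DC)$ to the unit $\CC$ of $\PC(\CC)$.

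For (ii) $\Rightarrow$ (iii): I would first observe that (H1), together with (H2) applied to $B=\DC$ and with (H3), says exactly that the subsets $\phi(\{\vm\})$ of $\CC$, for $\vm\in\DC$, are pairwise disjoint and have union $\phi(\DC)=\CC$; that is, they form a partition of $\CC$ (empty blocks allowed). Consequently each $\cm\in\CC$ lies in a unique $\phi(\{\vm\})$, and I would define $q\colon\CC\to\DC$ by sending $\cm$ to that $\vm$. It then remains to check, for every $A\subseteq\DC$, that $q^{-1}(A)=\bigcup_{\vm\in A}\phi(\{\vm\})$, which equals $\phi(A)$ by (H2); hence $\phi=q^{-1}$. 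For (iii) $\Rightarrow$ (i): taking preimages along any function $q\colon\CC\to\DC$ commutes with $\cap$ and with $\triangle$ — the latter because the indicator function of $q^{-1}(A)$ on $\CC$ is the indicator of $A$ precomposed with $q$, and $\triangle$ corresponds to addition of indicators over $\F_2$ — and sends $\DC$ to $\CC$; so $q^{-1}$ is a unital ring homomorphism $(\PC(\DC),\triangle,\cap)\to(\PC(\CC),\triangle,\cap)$.

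The only step carrying genuine content is (ii) $\Rightarrow$ (iii), where one must recognize that (H1)--(H3) together amount precisely to the assertion that the images of the singletons partition $\CC$, which is exactly what licenses the fiberwise definition of $q$; everything else is routine manipulation of $\triangle$ and $\cap$. A minor point worth flagging is that implication (iii) is self-consistent with the unitality convention, since $q^{-1}$ automatically carries the unit $\DC$ to the unit $\CC$ — this is also why the zero map is (rightly) excluded from the equivalence.
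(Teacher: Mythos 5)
Your proof is correct and takes essentially the same route as the paper: the only step with real content, (ii) $\Rightarrow$ (iii), is identical (the images of singletons partition $\CC$ by (H1)--(H3), which defines $q$ fiberwise), and you simply arrange the implications as a cycle (i) $\Rightarrow$ (ii) $\Rightarrow$ (iii) $\Rightarrow$ (i) where the paper proves (i) $\Leftrightarrow$ (ii) and (ii) $\Leftrightarrow$ (iii) separately, replacing the paper's induction on $|B|$ for (H2) with the slicker ``disjoint union $=$ symmetric difference'' observation and proving (iii) $\Rightarrow$ (i) directly via indicator functions over $\F_2$ rather than reducing it to (ii). Your remark that (H1) must be read for \emph{distinct} $\vm^1,\vm^2$ is well taken: the paper's own proof of that clause treats only the distinct case, so the statement is implicitly so intended.
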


\begin{proof}
(i) $\Rightarrow$ (ii): Suppose that $\phi: (\PC(\DC),\triangle,\cap)\to (\PC(\CC),\triangle,\cap)$ is a ring homomorphism. Then for two distinct elements $\vm^1, \vm^2$ of $\DC$ we have:
\begin{align*}
\emptyset &=\phi(\emptyset)\\
                 &=\phi(\{\vm^1\}\cap \{\vm^2\})\\
                 &=\phi(\{\vm^1\})\cap \phi(\{\vm^2\}).
\end{align*}
Thus (H1) holds. 

We show (H2) by induction on $|B|$. For $|B|=1$ the statement is true. Suppose that (H2) holds when $|B|=k$ and suppose that $|B|=k+1$. Let $B=B'\cup \{\wm\}$, where $|B'|=k$. Then
\begin{align*}
\phi(B)&=\phi(B'\cup \{\wm\})\\
           &=\phi(B'\triangle \{\wm\})\\
           &=\phi(B')\triangle\, \phi(\{\wm\})\\
           &=(\cup_{\vm'\in B'} \phi(\{\vm'\}) \triangle\, \phi(\{\wm\})\\
           &=(\cup_{\vm'\in B'} \phi(\{\vm'\}) \cup \phi(\{\wm\}) \quad\quad\quad\quad \text{(by (H1))}\\
           &=\cup_{\vm\in B} \phi(\{\vm\}).
\end{align*}
Thus (H2) holds.

Finally $\phi(\DC)=\CC$ as the identity element has to be mapped to the identity element. Thus (H3) holds.

(ii) $\Rightarrow$ (i): Suppose that  $\phi: (\PC(\DC),\triangle,\cap)\to (\PC(\CC),\triangle,\cap)$ is a map satisfying the conditions (H1), (H2), and (H3). Let $B_1, B_2\in\PC(\DC)$. We have:
\begin{align*}
\phi(B_1\triangle B_2)&=\cup_{\vm\in B_1\triangle B_2}\, \phi(\{\vm\})\\
                                   &=\cup_{\vm\in B_1}\phi(\{\vm\})\, \triangle \cup_{\vm\in B_2}\phi(\{\vm\})\\
                                   &=\phi(B_1)\triangle\, \phi(B_2).
\end{align*}
 We used here the conditions (H1) nd (H2). In the same way we get $h(B_1\cap B_2)=\phi(B_1)\cap \phi(B_2)$. Finally the condition $\phi(\DC)=\CC$ is postulated. Thus $\phi$ is a ring homomorphism.

(ii) $\Rightarrow$ (iii): Suppose (ii) holds. Let $\um\in\CC$. Then $\um\in\phi(\DC)$ by (H3), hence $\um\in\cup_{\vm\in\DC} \phi(\vm)$ by (H2), hence $\um\in\phi(\vm)$ for some $\vm\in\DC$. Such a $\vm$ is unique by (H1). Thus for every $\um\in\CC$ there is a unique $\vm(\um)\in\DC$ such that $\um\in\phi(\vm(\um))$. Define $q:\CC\to\DC$ by putting $q(\um)=\vm(\um)$.
Now for every $\vm\in\DC$ we have
\begin{align*}
q^{-1}(\{\vm\})&=\{\um\;:\;\vm(\um)=\vm\}\\
                          &=\{\um\;:\;\um\in\phi(\{\vm\})\}\\
                          &=\phi(\{\vm\}).
\end{align*}
Hence, by (H2),
\begin{align*}
\phi(B)&=\cup_{\vm\in B}\, \phi(\{\vm\})\\
           &=\cup_{\vm\in B}\, q^{-1}(\vm)\\
           &=q^{-1}(B)
\end{align*}
for every $B\subseteq \DC$. Thus $\phi=q^{-1}$.

(iii) $\Rightarrow$ (ii): Clear.
\end{proof}

\begin{proposition}[{\cite[Theorem 1.1]{cy}}]\label{correspondence_thm}
Let $\CC, \DC$ be two codes. There is a bijective correspondence between the set of maps $q:\CC\to \DC$ and the set of ring homomorphisms $\phi:(\PC(\DC),\triangle,\cap)\to (\PC(\CC),\triangle,\cap)$. It is given by associating to each map $q:\CC\to \DC$ the homomorphism $q^{-1}:\PC(\DC)\to \PC(\CC)$ and, conversely, by associating to each ring homomorphism $\phi:\PC(\DC)\to \PC(\CC)$ the unique map $q_\phi:\CC\to\DC$ such that $\phi=q_\phi^{-1}$.
\end{proposition}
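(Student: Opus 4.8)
The plan is to deduce this entirely from Theorem~\ref{thm_char_hom}, which already contains the substantive content: it identifies ``being a ring homomorphism'' with ``being of the form $q^{-1}$''. Concretely, I would define an assignment $\Psi$ from the set of functions $q:\CC\to\DC$ to the set of ring homomorphisms $\phi:(\PC(\DC),\triangle,\cap)\to(\PC(\CC),\triangle,\cap)$ by $\Psi(q)=q^{-1}$. That $\Psi$ is well defined, i.e.\ that $q^{-1}$ really is a ring homomorphism, is precisely the implication (iii)~$\Rightarrow$~(i) of Theorem~\ref{thm_char_hom}. That $\Psi$ is surjective is the implication (i)~$\Rightarrow$~(iii): every ring homomorphism $\phi$ equals $q^{-1}$ for some $q:\CC\to\DC$.

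The only point requiring a small argument is injectivity of $\Psi$, and the key observation is that for any function $q:\CC\to\DC$ the fibres $q^{-1}(\{\vm\})$, $\vm\in\DC$, partition $\CC$, every $\um\in\CC$ lying in exactly the block $q^{-1}(\{q(\um)\})$. Hence if $q_1^{-1}=q_2^{-1}$ as maps $\PC(\DC)\to\PC(\CC)$, then in particular $q_1^{-1}(\{\vm\})=q_2^{-1}(\{\vm\})$ for every $\vm\in\DC$; taking $\vm=q_1(\um)$ for a given $\um\in\CC$ gives $\um\in q_1^{-1}(\{\vm\})=q_2^{-1}(\{\vm\})$, which forces $q_2(\um)=\vm=q_1(\um)$. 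So $q_1=q_2$, and $\Psi$ is a bijection. Its inverse is exactly the assignment $\phi\mapsto q_\phi$ named in the statement: given $\phi$, the proof of (ii)~$\Rightarrow$~(iii) in Theorem~\ref{thm_char_hom} produces, for each $\um\in\CC$, the unique $\vm\in\DC$ with $\um\in\phi(\{\vm\})$, and $q_\phi(\um)=\vm$ defines the unique function with $\Psi(q_\phi)=\phi$ (uniqueness being the injectivity just proved). Thus $q\mapsto q^{-1}$ and $\phi\mapsto q_\phi$ are mutually inverse, which is the claimed bijective correspondence.

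I do not expect a genuine obstacle: everything of weight is already in Theorem~\ref{thm_char_hom}, and what remains is the bookkeeping that the two assignments compose to the identity, which rests only on the elementary fact that a function is recovered from the partition of its domain into fibres of singletons. The one thing to be careful about is to note that the $q_\phi$ manufactured inside the proof of Theorem~\ref{thm_char_hom} is literally the same construction as the $q_\phi$ named in the proposition, so that no separate verification of ``mutually inverse'' is needed beyond the injectivity argument above.
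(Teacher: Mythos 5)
Your proof is correct and takes essentially the same route as the paper: both deduce the correspondence from Theorem~\ref{thm_char_hom}, sending $q\mapsto q^{-1}$ and recovering $q_\phi$ from the construction in that theorem's proof. The only difference is that you spell out the injectivity of $q\mapsto q^{-1}$ via the fibre argument, which the paper asserts without elaboration.
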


\begin{proof}
For any map $q:\CC\to\DC$, the map $q^{-1}:(\PC(\DC),\triangle,\cap)\to(\PC(\CC),\triangle,\cap)$ is a ring homomorphism by Theorem \ref{thm_char_hom}, and if $q\ne q'$, then $q^{-1}\ne q'^{-1}$. Also, if $\phi:(\PC(\DC),\triangle,\cap)\to (\PC(\CC),\triangle,\cap)$ is a ring homomorphism, then (by the proof of Theorem \ref{thm_char_hom}) 
the map $q_\phi:\CC\to\DC$, defined by putting $q_\phi(\um)=\vm$ if $\um\in\phi(\{\vm\})$,
satisfies $\phi=q_\phi^{-1}$. It is a unique map from $\CC$ to $\DC$ with that property as two different maps from $\CC$ to $\DC$ give rise to different inverse maps from $\PC(\DC)$ to $\PC(\CC)$. Also, if $\phi\ne\phi'$, then $q_\phi\ne q_{\phi'}$.
\end{proof}

\begin{definition}\label{def_code_map}
A {\it code map} is any map $q:\CC\to\DC$, where $\CC, \DC$ are two codes.
\end{definition}

\begin{definition}\label{def_neural_ring_hom}
A {\it neural ring homomorphism} is any ring homomorphism $\phi:(\PC(\DC),\triangle,\cap)\to (\PC(\CC),\triangle,\cap)$,  where $\CC, \DC$ are two codes. 
\end{definition}

\begin{definition}\label{assoc_maps_homs}
We say that a code map $q:\CC\to\DC$ and the neural ring homomorphism $q^{-1}:\PC(\DC)\to\PC(\CC)$ are {\it associated} to each other. Equivalently, we say that a neural ring homomorphism $\phi:(\PC(\DC),\triangle,\cap)\to (\PC(\CC),\triangle,\cap)$ and the unique code map $q_\phi:\CC\to\DC$ such that $\phi=q_\phi^{-1}$, are {\it associated} to each other.
\end{definition}

%-----------------------------------------------------------------------

\section{Monomial and linear monomial maps and homomorphisms}

\begin{definition}\label{def_Code}
We denote by $\mathrm{Codes}$ the set of all neural codes $\CC\subseteq \F_2^n$ of all lengths $n\ge 0$. The set $\mathrm{Codes}$, together with code maps as morphisms, forms a small category, which we denote by $\mathrm{\bf Code}$.
\end{definition}

\begin{definition}
We denote
\[\mathrm{NRings}=\{(\PC(\CC), \triangle, \cap)\mid \CC\in\mathrm{Codes}\}\]
and call this set the {\it set of all neural rings}.
The set $\mathrm{NRings}$, together with ring homomorphisms as morphisms, forms a small category, which we denote by $\mathrm{\bf NRing}$.
\end{definition}

\begin{theorem}\label{thm_Code_NRing}
Consider the categories $\mathrm{\bf Code}$ and $\mathrm{\bf NRing}$. If to each code $\CC\in\mathrm{Codes}$ we associate its neural ring $F(\CC)=(\PC(\CC),\triangle, \cap)$ and to each code map $q:\CC\to\DC$ the homomorphism of neural rings $F(q)=q^{-1}:\PC(\DC)\to \PC(\CC)$, then in this way we obtain a contravariant functor $F:\mathrm{\bf Code}\to\mathrm{\bf NRing}$, which is an isomorphism of these categories.
\end{theorem}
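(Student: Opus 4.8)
The plan is to verify the three ingredients that make $F$ an isomorphism of categories: that $F$ is a well-defined contravariant functor, that it is a bijection on objects, and that it is a bijection on each hom-set. An elementary observation about functors then produces the inverse functor and finishes the proof.

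First I would check well-definedness and functoriality. On objects, $F(\CC)=(\PC(\CC),\triangle,\cap)$ belongs to $\mathrm{NRings}$ by the definition of that set. On a code map $q:\CC\to\DC$, the preimage assignment $q^{-1}:\PC(\DC)\to\PC(\CC)$ is a ring homomorphism by the implication (iii)$\Rightarrow$(i) of Theorem \ref{thm_char_hom}, so $F(q)$ is a morphism $F(\CC)\to F(\DC)$ in $\mathrm{\bf NRing}$, with the direction reversed as required of a contravariant functor. That $F$ respects the categorical structure is the pair of set-theoretic identities $(\mathrm{id}_\CC)^{-1}=\mathrm{id}_{\PC(\CC)}$ and $(q\circ p)^{-1}=p^{-1}\circ q^{-1}$ for composable code maps $p:\CC\to\DC$ and $q:\DC\to\mathcal{E}$; the latter says exactly $F(q\circ p)=F(p)\circ F(q)$.

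Next I would establish the two bijections. Surjectivity of $F$ on objects is immediate from the definition of $\mathrm{NRings}$, and injectivity follows because $\CC$ can be recovered from the ring $(\PC(\CC),\triangle,\cap)$ as its multiplicative identity (equivalently, the $\cap$-largest element); hence $F(\CC)=F(\DC)$ forces $\CC=\DC$. Bijectivity of $F$ on hom-sets is precisely Proposition \ref{correspondence_thm}: for fixed codes $\CC,\DC$, the map $q\mapsto q^{-1}$ is a bijection from the set of code maps $\CC\to\DC$ onto the set of ring homomorphisms $\PC(\DC)\to\PC(\CC)$, with explicit inverse $\phi\mapsto q_\phi$.

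Finally I would invoke the standard fact that a functor which is bijective on objects and bijective on every hom-set is an isomorphism of categories: explicitly, define $G:\mathrm{\bf NRing}\to\mathrm{\bf Code}$ by $G\big((\PC(\CC),\triangle,\cap)\big)=\CC$ on objects and $G(\phi)=q_\phi$ on morphisms; the bijections just recorded show $G$ is a contravariant functor with $GF=\mathrm{id}_{\mathrm{\bf Code}}$ and $FG=\mathrm{id}_{\mathrm{\bf NRing}}$. I do not expect a genuine obstacle, since all the substance is already in Theorem \ref{thm_char_hom} and Proposition \ref{correspondence_thm}; the only points deserving a moment's care are bookkeeping with variances (so that $GF$ and $FG$ come out as the identity functors, not merely isomorphic to them) and the mild convention that a code is remembered together with its length, so that $\CC\mapsto\PC(\CC)$ is genuinely injective on $\mathrm{Codes}$ even for the empty codes.
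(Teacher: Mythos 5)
Your proof is correct and follows the same approach as the paper, which simply asserts that functoriality is routine and then cites Proposition~\ref{correspondence_thm} for the isomorphism; you have filled in exactly those routine verifications in full. Your parenthetical observation that $F$ is injective on objects only under the convention that a code remembers its ambient length (so that the empty codes $\emptyset\subseteq\F_2^n$ for different $n$ are counted as distinct objects of $\mathrm{\bf Code}$) flags a genuine subtlety that the paper passes over in silence.
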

\begin{proof}
It is easy to verify that $F$ is a functor between these categories. The fact that $F$ is an isomorphism follows from Proposition \ref{correspondence_thm}.
\end{proof}

\begin{definition}[{\cite[Section 1.5]{cy}}]
The following code maps are called {\it basic linear monomial maps}:

(1) $\mathrm{acz}_\CC:\CC\to \CC'=\mathrm{acz}_\CC(\CC)$, ``adding constant zero", defined by $\wm\mapsto \wm 0$ for all $\wm\in\CC$, where $\CC\in\mathrm{Codes}$;

(2) $\mathrm{aco}_\CC:\CC\to \CC'=\mathrm{aco}_\CC(\CC)$, ``adding constant one", defined by $\wm\mapsto \wm 1$ for all $\wm\in\CC$, where $\CC\in\mathrm{Codes}$;

(3) $\mathrm{del}_{\CC,i}:\CC\to \CC'=\mathrm{del}_{\CC,i}(\CC)$, ``deleting the $i$-th neuron", defined by $\wm\mapsto w_1\dots\widehat{w_i}\dots w_n$ for all $\wm=w_1w_2\dots w_n\in\CC$, where $\CC\in\mathrm{Codes}$ and $i\in [n]$; here the notation $\widehat{w_i}$ means that the $i$-th component of $\wm$ is omitted;

(4) $\mathrm{rep}_{\CC,i}:\CC\to \CC'=\mathrm{rep}_{\CC,i}(\CC)$, ``repeating the $i$-th neuron", defined by $\wm\mapsto \wm w_i$ for all $\wm=w_1w_2\dots w_n\in\CC$, where $\CC\in\mathrm{Codes}$ and $i\in [n]$;

(5) $\mathrm{per}_{\CC,\sigma}:\CC\to \CC'=\mathrm{per}_{\CC,\sigma}(\CC)$, ``permuting the indices", defined by $\wm\mapsto w_{\sigma(1)}w_{\sigma(2)}\dots w_{\sigma(n)}$ for all $\wm=w_1w_2\dots w_n\in\CC$, where $\CC\in\mathrm{Codes}$ and $\sigma\in S_n$;

(6) $\mathrm{inj}_{\CC', \CC}:\CC\to\CC'$, ``injecting the code into a bigger code", defined by $\wm\mapsto \wm$ for all $\wm\in\CC$, where $\CC,\CC'\in\mathrm{Codes}$ with $\CC\subseteq \CC'$.
\end{definition}

We extend the previous definition and introduce the notion of {\it basic monomial maps} by including all the basic linear monomial maps and adding one new code map that we call {\it adding a trunk neuron}.

\begin{definition}
The code maps (1) - (6) and the following code map (7) are called {\it basic monomial maps}: 

(7) $\mathrm{atn}_{\CC,\alpha}:\CC\to \CC'=\mathrm{atn}_{\CC,\alpha}(\CC)$, ``adding a trunk neuron", defined by 
\[\mathrm{atn}_{\CC,\alpha}(\wm)=
                               \begin{cases}
                                    \wm 1, & \text{if $\wm\in \mathrm{Tk}_\alpha^\CC$}\\
                                    \wm 0, & \text{if $\wm\notin \mathrm{Tk}_\alpha^\CC$,}  
                               \end{cases}
\]
\end{definition}for all $\wm\in\CC$, where $\CC\in\mathrm{Codes}$ and $\alpha\subseteq [n]$.

\begin{remark}\label{rmk_tms}
We will usually write $\mathrm{atn}_{\CC, i}$ instead of $\mathrm{atn}_{\CC,\{i\}}$. Note that
\[\mathrm{atn}_{\CC,i}=\textrm{rep}_{\CC,i}\]
and
\[\mathrm{atn}_{\CC,\emptyset}=\mathrm{aco}_\CC.\]
Also, if $\mathbf{1}=11\dots 1\notin\CC$, then
\[\mathrm{atn}_{\CC,[n]}=\mathrm{acz}_\CC.\]
\end{remark}

\begin{definition}[{\cite[Section 1.5]{cy}}]
A map $q:\CC\to \DC$, where $\CC, \DC\in\mathrm{Codes}$, is called a {\it linear monomial map} if the inverse image under $q$ of every simple trunk of $\DC$ is either a simple trunk of $\CC$, or the empty set, or $\CC$.
\end{definition}

\begin{proposition}\label{basic_linear_monomial}

(a) Every basic linear monomial map is a linear monomial map.

(b) A composition of two linear monomial maps is a linear monomial map.

(c) For any code $\CC$ the identity map $\mathrm{Id}_\CC:\CC\to\CC$ is a linear monomial map.
\end{proposition}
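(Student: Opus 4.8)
The plan is to prove the three parts (a), (b), (c) essentially by unwinding the definition of a linear monomial map, namely that the preimage under $q$ of every simple trunk of the codomain is a simple trunk of the domain, or $\emptyset$, or the whole domain.

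For part (c): if $\mathrm{Id}_\CC:\CC\to\CC$ is the identity, then for a simple trunk $\mathrm{Tk}_i^\CC$ of the codomain we have $\mathrm{Id}_\CC^{-1}(\mathrm{Tk}_i^\CC)=\mathrm{Tk}_i^\CC$, which is a simple trunk of the domain; so the condition holds trivially. For part (b): if $q:\CC\to\DC$ and $q':\DC\to\EC$ are linear monomial (I will need to use a different letter than $\E$, which is taken, so I would write $q'\colon\DC\to\mathcal F$ or introduce a fresh code), then for a simple trunk $T$ of the codomain of $q'$, the set $q'^{-1}(T)$ is a simple trunk of $\DC$, or $\emptyset$, or $\DC$; applying $q^{-1}$ and using that $q^{-1}$ commutes with these three possibilities ($q^{-1}$ of a simple trunk is a simple trunk or $\emptyset$ or $\CC$ by hypothesis, $q^{-1}(\emptyset)=\emptyset$, $q^{-1}(\DC)=\CC$) gives that $(q'\circ q)^{-1}(T)=q^{-1}(q'^{-1}(T))$ is again a simple trunk of $\CC$, or $\emptyset$, or $\CC$. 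So the composition is linear monomial.

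The only part with real content is (a), and it is handled case by case over the six types of basic linear monomial maps. For each type I would compute, for an arbitrary simple trunk $\mathrm{Tk}_j^{\CC'}$ of the codomain $\CC'$, what its preimage is. For $\mathrm{acz}_\CC$ (appending a $0$): the last neuron $n+1$ is never $1$, so $\mathrm{acz}_\CC^{-1}(\mathrm{Tk}_{n+1}^{\CC'})=\emptyset$, while for $j\le n$ the preimage is $\mathrm{Tk}_j^\CC$. For $\mathrm{aco}_\CC$ (appending a $1$): the last neuron is always $1$, so $\mathrm{aco}_\CC^{-1}(\mathrm{Tk}_{n+1}^{\CC'})=\CC$, and for $j\le n$ it is $\mathrm{Tk}_j^\CC$. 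For $\mathrm{del}_{\CC,i}$: after deletion the neuron indices shift, and the preimage of the $j$-th simple trunk of $\CC'$ is the simple trunk of $\CC$ at the original index ($j$ if $j<i$, $j+1$ if $j\ge i$); here one should note the new $i$-th neuron — actually since the $i$-th neuron is removed, there is no issue, every simple trunk of $\CC'$ pulls back to a simple trunk of $\CC$. For $\mathrm{rep}_{\CC,i}$: the new neuron $n+1$ equals $w_i$, so $\mathrm{rep}_{\CC,i}^{-1}(\mathrm{Tk}_{n+1}^{\CC'})=\mathrm{Tk}_i^\CC$, and for $j\le n$ it is $\mathrm{Tk}_j^\CC$. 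For $\mathrm{per}_{\CC,\sigma}$: the preimage of $\mathrm{Tk}_j^{\CC'}$ is $\mathrm{Tk}_{\sigma^{-1}(j)}^\CC$ (or $\mathrm{Tk}_{\sigma(j)}^\CC$, depending on how the permutation acts on coordinates — I would fix the convention to match the definition $\wm\mapsto w_{\sigma(1)}\dots w_{\sigma(n)}$, which gives preimage $\mathrm{Tk}_{\sigma(j)}^\CC$). For $\mathrm{inj}_{\CC',\CC}$ with $\CC\subseteq\CC'$: here the codomain is $\CC'$ and $\mathrm{inj}^{-1}(\mathrm{Tk}_j^{\CC'})=\mathrm{Tk}_j^{\CC'}\cap\CC=\mathrm{Tk}_j^\CC$.

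I expect the main (minor) obstacle to be bookkeeping: getting the index-shifting right for $\mathrm{del}$ and the direction of the permutation right for $\mathrm{per}$, and being careful that the statement of ``linear monomial map'' requires the preimage to be a simple trunk \emph{of the domain} — so in each case one must exhibit the correct index $j'$ in the domain with $q^{-1}(\mathrm{Tk}_j^{\CC'})=\mathrm{Tk}_{j'}^\CC$, or else verify the preimage is $\emptyset$ or $\CC$. None of this is conceptually hard; the proof is a routine verification once the conventions are pinned down.

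\begin{proof}
(a) Let $q$ be a basic linear monomial map with domain $\CC\subseteq\F_2^n$. We check, for each of the six types, that the preimage of an arbitrary simple trunk of the codomain $\CC'$ is a simple trunk of $\CC$, the empty set, or $\CC$.

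For $q=\mathrm{acz}_\CC$ the codomain is $\CC'\subseteq\F_2^{n+1}$ with every word ending in $0$. For $j\le n$ a word $\wm 0$ lies in $\mathrm{Tk}_j^{\CC'}$ iff $w_j=1$, so $q^{-1}(\mathrm{Tk}_j^{\CC'})=\mathrm{Tk}_j^\CC$; and no word of $\CC'$ has last coordinate $1$, so $q^{-1}(\mathrm{Tk}_{n+1}^{\CC'})=\emptyset$. For $q=\mathrm{aco}_\CC$ the same computation gives $q^{-1}(\mathrm{Tk}_j^{\CC'})=\mathrm{Tk}_j^\CC$ for $j\le n$, while every word of $\CC'$ has last coordinate $1$, so $q^{-1}(\mathrm{Tk}_{n+1}^{\CC'})=\CC$. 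For $q=\mathrm{del}_{\CC,i}$, writing the image of $\wm$ as $w_1\dots\widehat{w_i}\dots w_n$, the $j$-th coordinate of $q(\wm)$ equals $w_j$ if $j<i$ and $w_{j+1}$ if $j\ge i$; hence $q^{-1}(\mathrm{Tk}_j^{\CC'})$ equals $\mathrm{Tk}_j^\CC$ if $j<i$ and $\mathrm{Tk}_{j+1}^\CC$ if $j\ge i$, in either case a simple trunk of $\CC$. For $q=\mathrm{rep}_{\CC,i}$ the codomain is $\CC'\subseteq\F_2^{n+1}$ and the last coordinate of $q(\wm)=\wm w_i$ equals $w_i$; so $q^{-1}(\mathrm{Tk}_j^{\CC'})=\mathrm{Tk}_j^\CC$ for $j\le n$ and $q^{-1}(\mathrm{Tk}_{n+1}^{\CC'})=\mathrm{Tk}_i^\CC$. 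For $q=\mathrm{per}_{\CC,\sigma}$ the $j$-th coordinate of $q(\wm)=w_{\sigma(1)}\dots w_{\sigma(n)}$ is $w_{\sigma(j)}$, so $q^{-1}(\mathrm{Tk}_j^{\CC'})=\mathrm{Tk}_{\sigma(j)}^\CC$. For $q=\mathrm{inj}_{\CC',\CC}$ with $\CC\subseteq\CC'$, the map is the inclusion, so $q^{-1}(\mathrm{Tk}_j^{\CC'})=\mathrm{Tk}_j^{\CC'}\cap\CC=\mathrm{Tk}_j^\CC$. In every case the defining condition is satisfied, so each basic linear monomial map is a linear monomial map.

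(b) Let $q\colon\CC\to\DC$ and $q'\colon\DC\to\CC''$ be linear monomial maps. Fix a simple trunk $\mathrm{Tk}_j^{\CC''}$ of $\CC''$. By hypothesis $q'^{-1}(\mathrm{Tk}_j^{\CC''})$ is a simple trunk $\mathrm{Tk}_k^\DC$ of $\DC$, or $\emptyset$, or $\DC$. Since $(q'\circ q)^{-1}(\mathrm{Tk}_j^{\CC''})=q^{-1}\big(q'^{-1}(\mathrm{Tk}_j^{\CC''})\big)$, we are in one of three cases: it equals $q^{-1}(\mathrm{Tk}_k^\DC)$, which is a simple trunk of $\CC$, or $\emptyset$, or $\CC$ because $q$ is linear monomial; or it equals $q^{-1}(\emptyset)=\emptyset$; or it equals $q^{-1}(\DC)=\CC$. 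In all cases it is a simple trunk of $\CC$, the empty set, or $\CC$, so $q'\circ q$ is a linear monomial map.

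(c) For the identity $\mathrm{Id}_\CC\colon\CC\to\CC$ and any $i\in[n]$ we have $\mathrm{Id}_\CC^{-1}(\mathrm{Tk}_i^\CC)=\mathrm{Tk}_i^\CC$, which is a simple trunk of $\CC$. Hence $\mathrm{Id}_\CC$ is a linear monomial map.
\end{proof}
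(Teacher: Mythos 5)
Your proof is correct and complete. The paper itself dispatches the proposition in one line --- part (a) is attributed to a citation of Curto--Youngs and parts (b), (c) are declared ``Easy'' --- so what you have written is the explicit verification that the paper leaves implicit, following the only natural approach (unwinding the definition of linear monomial map and checking each of the six basic maps case by case, with the index bookkeeping for $\mathrm{del}$ and $\mathrm{per}$ handled correctly).
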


\begin{proof}
 (a) Basic linear monomial maps are linear monomial maps by \cite{cy}.

(b), (c): Easy.
\end{proof}

\begin{theorem}[{\cite[Theorem 1.4]{cy}}]\label{thm_lmm}
A map $q:\CC\to \DC$, where $\CC, \DC\in\mathrm{Codes}$, is a {\it linear monomial map} if and only if it is the a composition of finitely many basic linear monomial maps.
\end{theorem}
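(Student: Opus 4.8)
The plan is to prove the two implications separately. For the easy direction, suppose $q : \CC \to \DC$ is a composition of finitely many basic linear monomial maps. By Proposition~\ref{basic_linear_monomial}(a) each basic linear monomial map is a linear monomial map, and by Proposition~\ref{basic_linear_monomial}(b) a composition of linear monomial maps is again a linear monomial map; a trivial induction on the number of factors then gives the claim. (If one wants to allow the empty composition, Proposition~\ref{basic_linear_monomial}(c) covers the identity map, which is realized e.g. as $\mathrm{inj}_{\CC,\CC}$ anyway.) This direction is essentially bookkeeping.

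The substantive direction is the converse: every linear monomial map is a composition of basic ones. Here I would follow the strategy of \cite[Theorem 1.4]{cy}. Starting from a linear monomial map $q : \CC \to \DC$ with $\CC \subseteq \F_2^m$ and $\DC \subseteq \F_2^n$, I would build $q$ one output coordinate at a time. The idea is that the $j$-th coordinate of $q(\wm)$, as a function of $\wm \in \CC$, is the indicator of the set $q^{-1}(\mathrm{Tk}_j^\DC)$, which by the definition of a linear monomial map is a simple trunk $\mathrm{Tk}_i^\CC$ of $\CC$, or $\emptyset$, or all of $\CC$. Accordingly:
\begin{itemize}
\item if the $j$-th output coordinate is identically $1$ on the image, append a constant-one neuron with $\mathrm{aco}$;
\item if it is identically $0$, append a constant-zero neuron with $\mathrm{acz}$ (using that after the relevant steps the all-ones word may be assumed absent, or otherwise handling this case directly);
\item if it equals the value of input coordinate $i$, append a copy of neuron $i$ with $\mathrm{rep}_{\cdot,i}$.
\end{itemize}
Carrying this out for $j = 1, 2, \dots, n$ produces a code $\widetilde{\CC}$ whose words are the pairs (original word, $q$-image), i.e. the graph of $q$; then deleting the first $m$ neurons one at a time via $\mathrm{del}$, and applying a permutation $\mathrm{per}_{\cdot,\sigma}$ if the surviving coordinates came out in the wrong order, yields a code equal to $q(\CC)$, and finally $\mathrm{inj}_{\DC, q(\CC)}$ embeds it into $\DC$. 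The composite of all these basic linear monomial maps agrees with $q$ on every word of $\CC$, hence equals $q$.

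The main obstacle is making the coordinate-by-coordinate construction precise and checking that each intermediate map is genuinely one of the six basic types with the correct parameter — in particular, that the preimages of simple trunks behave as claimed under the partial compositions, and that the edge cases (a coordinate that is constant, an input that appears in several output slots, the all-ones word) are handled so that $\mathrm{acz}$, $\mathrm{aco}$, $\mathrm{rep}$ apply legitimately. Since all of this is already carried out in \cite{cy}, I would cite \cite[Theorem 1.4]{cy} for the converse and merely indicate the construction above, leaving the routine verification to the reference.
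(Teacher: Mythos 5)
The paper does not actually prove Theorem~\ref{thm_lmm}; it merely cites \cite[Theorem~1.4]{cy}, and the only full decomposition argument it writes out is for the more general Theorem~\ref{thm_mm}. Your proposal is therefore structurally the same as what the paper does: handle the easy direction via Proposition~\ref{basic_linear_monomial} (parts (a) and (b), with (c) or $\mathrm{inj}_{\CC,\CC}$ for the empty composition), and defer the substantive converse to \cite{cy}, while indicating the coordinate-by-coordinate construction. Your sketch of that construction (append $n$ new neurons realizing the coordinates of $q$ via $\mathrm{aco}$, $\mathrm{acz}$, $\mathrm{rep}$, then strip the original $m$ coordinates with $\mathrm{del}$, and end with $\mathrm{inj}$) matches the proof of Theorem~\ref{thm_mm} essentially verbatim, so it is a faithful account of the argument being cited.

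Two small inaccuracies, neither fatal: the parenthetical worry about the all-ones word when applying $\mathrm{acz}$ is a red herring here — $\mathrm{acz}_\CC$ needs no side condition; the $\mathbf{1}\notin\CC$ hypothesis in Remark~\ref{rmk_tms} is only needed to express $\mathrm{acz}_\CC$ as an $\mathrm{atn}_{\CC,[n]}$, which is irrelevant in the linear-monomial setting where $\mathrm{acz}$ is itself a basic map. And if you delete the first $m$ coordinates one at a time, the remaining coordinates already appear in the order $v_1,\dots,v_n$, so the final $\mathrm{per}_{\cdot,\sigma}$ is unnecessary (the paper's proof of Theorem~\ref{thm_mm} instead inserts the permutation \emph{before} the deletions, so as to always delete the last coordinate; either ordering works).
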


\begin{definition}[{\cite[Definition 2.6]{a}}]
A map $q:\CC\to \DC$, where $\CC, \DC\in\mathrm{Codes}$, is called a {\it monomial map} if the inverse image under $q$ of every simple trunk of $\DC$ is either a trunk of $\CC$, or the empty set.
\end{definition}

\begin{proposition}\label{basic_monomial}

(a) Every linear monomial map is a monomial map.

(b) Every basic monomial map is a monomial map.

(c) A composition of two monomial maps is a monomial map.

(d) For any code $\CC$ the identity map $\mathrm{Id}_\CC:\CC\to\CC$ is a monomial map.
\end{proposition}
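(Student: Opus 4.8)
The plan is to handle the four parts in the order (a), (d), (b), (c), since (a) feeds into (b) and the ideas needed for (c) subsume the others. Parts (a) and (d) are immediate from the definitions: a simple trunk $\mathrm{Tk}_i^\CC$ is in particular a trunk, and $\CC=\mathrm{Tk}_\emptyset^\CC$ is a trunk, so each of the three preimages allowed to a linear monomial map (a simple trunk of $\CC$, the empty set, or $\CC$) is ``a trunk of $\CC$ or the empty set''; and for the identity map $\mathrm{Id}_\CC^{-1}(\mathrm{Tk}_i^\CC)=\mathrm{Tk}_i^\CC$, a trunk.

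For (b), Proposition \ref{basic_linear_monomial}(a) together with part (a) already shows that the basic linear monomial maps (1)--(6) are monomial maps, so only the new map $\mathrm{atn}_{\CC,\alpha}:\CC\to\CC'$ (with $\CC\subseteq\F_2^n$, $\CC'\subseteq\F_2^{n+1}$) needs checking. I would compute $\mathrm{atn}_{\CC,\alpha}^{-1}(\mathrm{Tk}_j^{\CC'})$ directly. For $j\le n$: since $\mathrm{atn}_{\CC,\alpha}$ leaves the first $n$ coordinates of each word unchanged, a word $\wm\in\CC$ lands in $\mathrm{Tk}_j^{\CC'}$ exactly when $w_j=1$, so the preimage is $\mathrm{Tk}_j^\CC$. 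For $j=n+1$: by definition of $\mathrm{atn}$ the appended coordinate equals $1$ precisely for $\wm\in\mathrm{Tk}_\alpha^\CC$, so the preimage is $\mathrm{Tk}_\alpha^\CC$. Both are trunks of $\CC$ (possibly empty), hence $\mathrm{atn}_{\CC,\alpha}$ is a monomial map.

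The substantive part is (c). Let $q:\CC\to\DC$ and $r:\DC\to\mathcal{E}$ be monomial maps and fix a simple trunk $\mathrm{Tk}_k^{\mathcal{E}}$; then $(r\circ q)^{-1}(\mathrm{Tk}_k^{\mathcal{E}})=q^{-1}(r^{-1}(\mathrm{Tk}_k^{\mathcal{E}}))$. Since $r$ is monomial, $r^{-1}(\mathrm{Tk}_k^{\mathcal{E}})$ is either $\emptyset$, in which case its $q$-preimage is $\emptyset$, or a trunk $\mathrm{Tk}_\beta^{\DC}$. So I need the lemma that $q^{-1}(\mathrm{Tk}_\beta^{\DC})$ is a trunk of $\CC$ or empty whenever $q$ is monomial. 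I would prove this by writing $\mathrm{Tk}_\beta^{\DC}=\bigcap_{j\in\beta}\mathrm{Tk}_j^{\DC}$ for $\beta\ne\emptyset$ (and treating $\beta=\emptyset$ separately, where $\mathrm{Tk}_\emptyset^{\DC}=\DC$ and $q^{-1}(\DC)=\CC=\mathrm{Tk}_\emptyset^\CC$), using that $q^{-1}$ commutes with intersections, and then invoking two elementary facts about trunks: (i) each $q^{-1}(\mathrm{Tk}_j^{\DC})$ is a trunk $\mathrm{Tk}_{\gamma_j}^\CC$ of $\CC$ or $\emptyset$, since $q$ is monomial; and (ii) $\mathrm{Tk}_{\gamma}^\CC\cap\mathrm{Tk}_{\delta}^\CC=\mathrm{Tk}_{\gamma\cup\delta}^\CC$, so a finite intersection of trunks of $\CC$ is again a trunk of $\CC$ (possibly empty). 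If any factor $q^{-1}(\mathrm{Tk}_j^{\DC})$ is empty the intersection is empty; otherwise it equals $\mathrm{Tk}_{\cup_{j\in\beta}\gamma_j}^\CC$. This yields (c).

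The main obstacle, such as it is, is mostly bookkeeping in (c): tracking the two-way possibility (the empty set versus a trunk) at each of the two stages and not overlooking the degenerate cases $\beta=\emptyset$ and empty trunks. The only genuine content is the two elementary facts about trunks recorded above — that simple trunks intersect to trunks and that inverse images commute with intersections — and both are routine.
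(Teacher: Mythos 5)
Your proposal is correct and follows essentially the same route as the paper: parts (a), (b), and (d) are handled exactly as the paper does (the paper itself only says ``(c) and (d): Easy,'' so your detailed argument for (c) is a legitimate filling-in of what the authors left to the reader). The key lemma you isolate --- that the preimage under a monomial map of \emph{any} trunk (not just a simple one) is again a trunk or empty, obtained by writing $\mathrm{Tk}_\beta^{\DC}=\bigcap_{j\in\beta}\mathrm{Tk}_j^{\DC}$, using that preimages commute with intersections, and that $\mathrm{Tk}_\gamma^\CC\cap\mathrm{Tk}_\delta^\CC=\mathrm{Tk}_{\gamma\cup\delta}^\CC$ --- is precisely the content that makes (c) go through, and your treatment of the degenerate cases ($\beta=\emptyset$, an empty factor, an empty resulting trunk) is careful and complete.
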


\begin{proof}
(a) Follows from the definitions and the fact that $\CC$ is a trunk, namely $\CC=\mathrm{Tk}^\CC_\emptyset$.

(b) Basic linear monomial maps are linear monomial maps by \cite{cy}, hence monomial maps. Consider the map $f=\mathrm{atn}_{\CC,\alpha}:\CC\to \DC=\mathrm{atn}_{\CC,\alpha}(\CC)$, where $\CC$ is a code on $n$ neurons. We have $f^{-1}(\mathrm{Tk}_i^{\DC})=\mathrm{Tk}_i^\CC$ for $i=1,\dots, n$. Also $f^{-1}(\mathrm{Tk}_{n+1}^{\DC})=\mathrm{Tk}_\alpha^\CC$.

(c) and (d): Easy.
\end{proof}

We now extend Theorem \ref{thm_lmm} to the case of monomial maps. Our proof follows the proof of Theorem 1.4 from \cite{cy}.

\begin{theorem}\label{thm_mm}
A map $q:\CC\to \DC$, where $\CC, \DC\in\mathrm{Codes}$, is a {\it monomial map} if and only if it is the a composition of finitely many basic monomial maps.
\end{theorem}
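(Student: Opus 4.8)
\textbf{Proof proposal for Theorem \ref{thm_mm}.}

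The plan is to mimic the proof of Theorem \ref{thm_lmm} (i.e. \cite[Theorem 1.4]{cy}), upgrading it to handle general trunks rather than simple trunks. One direction is immediate: if $q$ is a composition of finitely many basic monomial maps, then by Proposition \ref{basic_monomial}(b) each factor is a monomial map, and by Proposition \ref{basic_monomial}(c),(d) the composition is a monomial map. So the content is the forward direction: given a monomial map $q:\CC\to\DC$ with $\CC$ on $m$ neurons and $\DC$ on $n$ neurons, we must exhibit $q$ as such a composition.

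The strategy I would follow is to build up the target code $\DC$ one neuron at a time from $\CC$, recording at each stage a monomial map onto an intermediate code. Concretely, for each $j\in[n]$ consider the simple trunk $\mathrm{Tk}_j^\DC\subseteq\DC$; by hypothesis $q^{-1}(\mathrm{Tk}_j^\DC)$ is either a trunk $\mathrm{Tk}_{\alpha_j}^\CC$ of $\CC$ for some $\alpha_j\subseteq[m]$, or the empty set. First I would pre-compose with basic linear monomial maps to normalize: using $\mathrm{acz}$ once (and possibly $\mathrm{per}$) we may assume $\CC$ contains no codeword that would obstruct the constant-zero cases, so that the empty-preimage case can be realized by $\mathrm{atn}_{\CC,[m']}=\mathrm{acz}$ on a suitably padded code (cf. Remark \ref{rmk_tms}). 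Then, processing $j=1,2,\dots,n$ in turn, I would apply $\mathrm{atn}_{-,\alpha_j}$ (interpreted on the current intermediate code, with $\alpha_j$ the pulled-back trunk-defining set, or $\mathrm{acz}$ when the preimage is empty) to adjoin a new neuron whose value on a word agrees with membership in the relevant trunk. After appending all $n$ such neurons via basic monomial maps, the resulting code $\CC'$ carries, in its last $n$ coordinates, exactly the data of which of the trunks $q^{-1}(\mathrm{Tk}_j^\DC)$ a given word of $\CC$ lies in; one then deletes the original $m$ neurons via $\mathrm{del}$ maps, permutes with $\mathrm{per}$ if needed, and finally uses $\mathrm{inj}$ to land inside $\DC$. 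The point is that two words $\wm^1,\wm^2\in\CC$ lie in the same set of trunks $q^{-1}(\mathrm{Tk}_j^\DC)$ if and only if $q(\wm^1)$ and $q(\wm^2)$ agree on every coordinate, i.e. $q(\wm^1)=q(\wm^2)$ — this is because a word of $\DC$ is determined by the list of simple trunks containing it — so the composite constructed above genuinely equals $q$.

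The main obstacle I anticipate is the bookkeeping needed to show that the composite map I construct \emph{is} $q$ and not merely some monomial map with the same trunk-preimage data, together with the delicate handling of degenerate cases (when some $q^{-1}(\mathrm{Tk}_j^\DC)=\emptyset$, when $q$ is not injective, when $\CC$ or $\DC$ is the full code or sits on $0$ neurons, and the subtlety flagged in Remark \ref{rmk_tms} that $\mathrm{atn}_{\CC,[n]}=\mathrm{acz}_\CC$ only when $\mathbf{1}\notin\CC$, which is why an initial $\mathrm{acz}$ normalization is needed). A clean way to organize this is to first reduce, by composing with basic linear monomial maps at both ends, to the case where $q$ is a bijection onto its image and $\CC$ avoids the all-ones word, handle that core case by the trunk-by-trunk construction above, and then observe that the reductions themselves were basic monomial maps. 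Verifying that each $\mathrm{atn}$ step is legitimate — i.e. that the set $\alpha_j$ really does define a trunk of the \emph{current} intermediate code and not just of $\CC$ — requires checking that adjoining trunk neurons does not destroy the trunk structure being referenced, which follows because the new coordinates are functionally determined by the old ones. I expect the argument, once the reductions are in place, to parallel \cite[Theorem 1.4]{cy} closely, with $\mathrm{atn}_{-,\alpha}$ playing the role that $\mathrm{rep}$ played there for simple trunks.
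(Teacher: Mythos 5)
Your proposal follows essentially the same route as the paper's proof: build the target one neuron at a time by appending, for each simple trunk $\mathrm{Tk}_j^\DC$, a new coordinate recording membership in $q^{-1}(\mathrm{Tk}_j^\DC)$ (via $\mathrm{atn}$, $\mathrm{aco}$, or $\mathrm{acz}$ as appropriate), then permute, delete the original $m$ neurons, and finally inject into $\DC$. This is precisely the paper's chain $\CC=\CC_0\to\CC_1\to\dots\to\CC_n\to\CC_{n+1}\to\dots\to\CC_{n+m+1}\to\DC$, modeled on \cite[Theorem 1.4]{cy}.

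One remark on your suggested ``normalizations'': they are not needed and add friction. You worry about expressing the empty-preimage case as $\mathrm{atn}_{-,[m']}$ (which by Remark~\ref{rmk_tms} requires avoiding the all-ones word) and therefore propose to pad $\CC$ first with $\mathrm{acz}$. But $\mathrm{acz}$ is itself a basic linear monomial map, hence already a basic monomial map by definition, so one simply uses it directly whenever $q^{-1}(\mathrm{Tk}_j^\DC)=\emptyset$; no padding or detour through $\mathrm{atn}$ is required. Similarly, the proposed reduction to the case where $q$ is a bijection onto its image is superfluous: non-injectivity causes no trouble because the intermediate objects are codes (sets), and non-surjectivity is absorbed by the final $\mathrm{inj}$ step. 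Dropping these reductions leaves exactly the paper's argument, and your key correctness observation — that a word of $\DC$ is determined by which simple trunks contain it, so the composite agrees with $q$ — is the right one.
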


\begin{proof}
$\Leftarrow$) Follows from Proposition \ref{basic_monomial}.

$\Rightarrow$) Let $\CC$ be a code of length $m$, $\DC$ a code of length $n$, and let $q:\CC\to\DC$ be a monomial map. We introduce the codes $\CC_0, \CC_1,\dots, \CC_n$ in the following way: we put $\CC_0=\CC$ and, for $i\in[n]$,
\[\CC_i=\{\um v_1v_2\dots v_i\;|\; \um\in\CC, \vm=v_1v_2\dots v_n=q(\um)\}.\]
We also introduce the code maps $q_i:\CC_{i-1}\to\CC_i$ $(i=1,2,\dots, n)$ in the following way:
\[q_i(\um v_1v_2\dots v_{i-1})=\um v_1v_2\dots v_{i-1}v_i,\]
where $\um\in\CC$ and $\vm=v_1v_2\dots v_n=q(\um)$. Since $q^{-1}(\mathrm{Tk}_i^{\DC})$ for $i=1,2,\dots, n$ is either $\mathrm{Tk}_\alpha^\CC$ (with $\alpha\subseteq [i]$), or $\emptyset$, or $\CC$, we have that $q_i$ is, respectively, either $\mathrm{atn}_{\CC_{i-1},\alpha}$, or $\mathrm{acz}_{\CC_{i-1}}$, or $\mathrm{aco}_{\CC_{i-1}}=\mathrm{atn}_{\CC_{i-1},\emptyset}$. We also introduce the code $\CC_{n+1}$ in the following way: 
\[\CC_{n+1}=\{\vm\um\;|\;\um\in\CC, \vm=q(\um)\}.\]
Let $\sigma\in S_{m+n}$ be the permutation defined by $\sigma(i)=i+n$ for $i=1,2,\dots, m$, and $\sigma(i)=i-m$ for $i=m+1, m+2, \dots, m+n$. Let $q_{n+1}:\CC_n\to\CC_{n+1}$ be defined as
\[q_{n+1}=\mathrm{per}_{\CC_n,\sigma}.\]
Now for $i=1,2,\dots, m$ we introduce the codes $\CC_{n+1+i}$ in the following way:
\[\CC_{n+1+i}=\{\vm u_1u_2\dots u_{m-i}\;|\; \um=u_1u_2\dots u_m\in\CC, \vm=q(\um)\}.\]
We also introduce the code maps $q_{n+1+i}:\CC_{n+i}\to \CC_{n+1+i}$ $(i=1,2,\dots, m)$ in the following way:
\[q_{n+1+i}=\mathrm{del}_{\CC_{n+i}, n+m+1-i}.\]
Finally, we denote $\CC_{n+m+2}=\DC$ and introduce the code map $q_{n+m+2}:\CC_{n+m+1}\to \CC_{n+m+2}$ as
\[q_{n+m+2}=\mathrm{inj}_{\CC_{n+m+2}, \CC_{n+m+1}}.\]
We have that
\[q=q_{n+m+2}\circ q_{n+m+1}\circ\dots\circ q_1\]
and each of the maps $q_1, q_2,\dots, q_{n+m+2}$ is a basic monomial map.
\end{proof}

\begin{proposition}
(a) (\cite{a}) The set $\mathrm{Codes}$, together with  monomial maps as morphisms, forms a small category (which we denote $\mathrm{\bf Code\_ m}$).

(b) The set $\mathrm{Codes}$, together with linear monomial maps as morp\-hisms, forms a small category (which we denote $\mathrm{\bf Code\_ lm}$).
\end{proposition}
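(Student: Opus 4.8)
The plan is to recognize each of these as a subcategory of the small category $\mathrm{\bf Code}$ of Definition \ref{def_Code} having all of $\mathrm{Codes}$ as its objects, so that almost all of the verification is automatic. Recall that such a subcategory is determined by a choice of a subclass of the morphisms of $\mathrm{\bf Code}$ that (i) is closed under composition and (ii) contains the identity morphism of every code; once (i) and (ii) hold, the associativity law and the two unit laws are inherited from $\mathrm{\bf Code}$ verbatim, since in all cases ``composition'' means the ordinary composition of set maps and the identities are the usual identity functions.

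For part (a) I would argue as follows. Every monomial map $q:\CC\to\DC$ is in particular a code map in the sense of Definition \ref{def_code_map}, so monomial maps form a subclass of the morphisms of $\mathrm{\bf Code}$. This subclass is closed under composition by Proposition \ref{basic_monomial}(c), and it contains $\mathrm{Id}_\CC$ for every $\CC\in\mathrm{Codes}$ by Proposition \ref{basic_monomial}(d). Hence monomial maps, together with $\mathrm{Codes}$, form a subcategory $\mathrm{\bf Code\_ m}$ of $\mathrm{\bf Code}$, in particular a category. For part (b) the argument is identical, using Proposition \ref{basic_linear_monomial}(b) in place of Proposition \ref{basic_monomial}(c) and Proposition \ref{basic_linear_monomial}(c) in place of Proposition \ref{basic_monomial}(d).

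It remains only to note smallness. The class $\mathrm{Codes}$ is genuinely a set: it is the union over $n\ge 0$ of the finite power sets $\PC(\F_2^n)$. Moreover, for any two codes $\CC,\DC$, the monomial (resp.\ linear monomial) maps from $\CC$ to $\DC$ form a subset of the set of all functions $\CC\to\DC$, hence a set. Therefore $\mathrm{\bf Code\_ m}$ and $\mathrm{\bf Code\_ lm}$ are small categories. (Alternatively, one may simply invoke the fact that a subcategory of a small category is small.)

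There is no real obstacle in this proof; the only thing one must actually invoke is that monomial (resp.\ linear monomial) maps are closed under composition and contain the identities, which is exactly the content of the parts of Propositions \ref{basic_monomial} and \ref{basic_linear_monomial} already established. Everything else — associativity, the unit laws, and smallness — is either inherited from $\mathrm{\bf Code}$ or immediate from the fact that $\mathrm{Codes}$ is a set.
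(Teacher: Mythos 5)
Your proof is correct, and it is considerably more explicit than what the paper itself provides. The paper's ``proof'' consists of a single sentence deferring to reference~\cite{a} for part~(a) and asserting that the same argument handles part~(b); it gives no details. You instead give a complete self-contained argument: you recognize $\mathrm{\bf Code\_m}$ and $\mathrm{\bf Code\_lm}$ as wide subcategories of the already-established small category $\mathrm{\bf Code}$, reduce the verification to the two standard subcategory conditions (closure under composition and containment of identities), and observe that these are exactly the content of Proposition~\ref{basic_monomial}(c)(d) and Proposition~\ref{basic_linear_monomial}(b)(c) respectively. The remark on associativity and unit laws being inherited from $\mathrm{\bf Code}$, and the note that $\mathrm{Codes}=\bigcup_{n\ge 0}\PC(\F_2^n)$ is a genuine set so that smallness follows, round the argument out cleanly. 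This is presumably the same substance as the argument in~\cite{a}, but your version has the virtue of being anchored to the propositions the paper has already established, making the logical dependencies within the paper itself transparent rather than outsourcing them.
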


\begin{proof}
The proof of (a) given in \cite{a} works for (b) in a similar way.
\end{proof}

\begin{definition}
Let $\CC, \DC$ be two neural codes. A homomorphism $\phi:\PC(\DC)\to \PC(\CC)$ is called a {\it monomial homomorphism} (resp. {\it linear monomial homomorphism}) if the associated code map $q_\phi:\CC\to\DC$ is a monomial map (resp. linear monomial map).
\end{definition}

\begin{proposition}
(a) (\cite{a}) The set $\mathrm{NRings}$, together with monomial homomorphisms as morphisms, forms a small category (which we denote $\mathrm{\bf NRing\_ m}$).

(b) The set $\mathrm{NRings}$, together with linear monomial homomorphisms as morphisms, forms a small category (which we denote $\mathrm{\bf NRing\_ lm}$).
\end{proposition}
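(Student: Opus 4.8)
The plan is to derive both parts from the previous proposition (monomial maps, resp.\ linear monomial maps, form a small category) together with Theorem \ref{thm_Code_NRing}. I would give the argument for part (a) in detail; part (b) is word-for-word the same with ``monomial'' replaced by ``linear monomial'' and Proposition \ref{basic_monomial} replaced by Proposition \ref{basic_linear_monomial}.

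The key observation is that, by Theorem \ref{thm_Code_NRing}, the contravariant functor $F:\mathrm{\bf Code}\to\mathrm{\bf NRing}$ is an isomorphism of categories; in particular its inverse is the contravariant functor sending a neural ring $(\PC(\CC),\triangle,\cap)$ to $\CC$ and a neural ring homomorphism $\phi$ to its associated code map $q_\phi$ (Definition \ref{assoc_maps_homs}), so that $F^{-1}(\phi)=q_\phi$. By definition, a neural ring homomorphism $\phi$ is a monomial homomorphism exactly when $q_\phi$ is a monomial map. Hence, if we let $\mathrm{\bf NRing\_ m}$ denote $\mathrm{NRings}$ equipped with the monomial homomorphisms as morphisms, then $\mathrm{\bf NRing\_ m}$ is precisely the image $F(\mathrm{\bf Code\_ m})$ of the small category $\mathrm{\bf Code\_ m}$ under the category isomorphism $F$. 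Therefore it suffices to check that $F$ carries $\mathrm{\bf Code\_ m}$ onto a subcategory of $\mathrm{\bf NRing}$, i.e.\ that the class of monomial homomorphisms contains all identities and is closed under composition.

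First I would note that for each code $\CC$ the identity $\mathrm{Id}_{\PC(\CC)}=F(\mathrm{Id}_\CC)$ has associated code map $\mathrm{Id}_\CC$, which is a monomial map by Proposition \ref{basic_monomial}(d); so every identity morphism of $\mathrm{\bf NRing}$ is a monomial homomorphism. Next, if $\phi$ and $\psi$ are composable monomial homomorphisms, then, because $F^{-1}$ is a contravariant functor, the code map associated to $\psi\circ\phi$ is $q_\phi\circ q_\psi$, a composition of monomial maps, hence itself a monomial map by Proposition \ref{basic_monomial}(c); thus $\psi\circ\phi$ is a monomial homomorphism. Associativity of composition and the unit laws are then inherited directly from $\mathrm{\bf NRing}$, since composition and identities in $\mathrm{\bf NRing\_ m}$ are those of $\mathrm{\bf NRing}$. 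Finally $\mathrm{\bf NRing\_ m}$ is small: $\mathrm{NRings}$ is a set, and for any two neural rings the monomial homomorphisms between them form a subset of the set of all ring homomorphisms between them.

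There is no genuine obstacle here; the argument is purely formal once Theorem \ref{thm_Code_NRing} and the preceding proposition are in hand. The one point that needs a moment's care is the contravariance bookkeeping --- that the code map associated to $\psi\circ\phi$ is $q_\phi\circ q_\psi$ and not $q_\psi\circ q_\phi$ --- but this is immediate from the fact that $F$, and hence $F^{-1}$, reverses the direction of composition, or equivalently from $(q_\phi\circ q_\psi)^{-1}=q_\psi^{-1}\circ q_\phi^{-1}=\psi\circ\phi$ on preimages.
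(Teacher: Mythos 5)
Your argument is correct and complete, and it is the natural one given the machinery already in place. The paper itself gives essentially no proof here---it cites \cite{a} for part (a) and says part (b) works similarly---so your transport-along-$F$ argument, resting on Theorem \ref{thm_Code_NRing} together with the closure properties in Propositions \ref{basic_monomial} and \ref{basic_linear_monomial} (with the contravariance bookkeeping handled correctly), supplies exactly the detail the paper leaves to the reader and matches the intended route.
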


\begin{proof}
The proof of (a) given in \cite{a} works for (b) in a similar way.
\end{proof}

\begin{theorem}
(a) (\cite{a})  Consider the categories $\mathrm{\bf Code\_ m}$ and $\mathrm{\bf NRing\_ m}$. If to each code $\CC\in \mathrm{Codes}$ we associate its neural ring $F(\CC)=(\PC(\CC),\triangle,\cap)$ and to each monomial map $q:\CC\to\DC$ the monomial homomorphism of neural rings $F(q)=q^{-1}:\PC(\DC)\to \PC(\CC)$, then in that way we obtain a contravariant functor $F:\mathrm{\bf Code\_ m}\to \mathrm{\bf NRing\_ m}$, which is an isomorphism of these categories.

(b) Consider the categories $\mathrm{\bf Code\_ lm}$ and $\mathrm{\bf NRing\_ lm}$. If to each code $\CC\in \mathrm{Codes}$ we associate its neural ring $F(\CC)=(\PC(\CC),\triangle,\cap)$ and to each linear monomial map $q:\CC\to\DC$ the linear monomial homomorphism of neural rings $F(q)=q^{-1}:\PC(\DC)\to \PC(\CC)$, then in that way we obtain a contravariant functor $F:\mathrm{\bf Code\_ lm}\to \mathrm{\bf NRing\_ lm}$, which is an isomorphism of these categories.
\end{theorem}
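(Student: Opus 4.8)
The plan is to deduce both statements from the isomorphism $F\colon\mathrm{\bf Code}\to\mathrm{\bf NRing}$ already established in Theorem \ref{thm_Code_NRing}, by checking that it restricts to the indicated wide subcategories. I would carry out part (a) in detail and obtain part (b) by replacing ``monomial'' with ``linear monomial'' throughout.

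First I would record that $\mathrm{\bf Code\_ m}$ is a wide subcategory of $\mathrm{\bf Code}$ and $\mathrm{\bf NRing\_ m}$ a wide subcategory of $\mathrm{\bf NRing}$ (same objects, fewer morphisms): this is exactly the content of the two propositions preceding this theorem together with Proposition \ref{basic_monomial}(c),(d) (a composition of monomial maps is monomial, identities are monomial). Since $F$ is already known to be a contravariant functor on the ambient categories, to see that it restricts to a contravariant functor $\mathrm{\bf Code\_ m}\to\mathrm{\bf NRing\_ m}$ it suffices to check that $F$ sends monomial maps to monomial homomorphisms. This is immediate from the definitions: if $q\colon\CC\to\DC$ is a monomial map, then $F(q)=q^{-1}$ has associated code map $q_{q^{-1}}$, and by the bijective correspondence of Proposition \ref{correspondence_thm} we have $q_{q^{-1}}=q$, which is monomial; hence $q^{-1}$ is a monomial homomorphism by definition. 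Preservation of identities and of composition is inherited from the ambient functor $F$.

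Next I would verify that the restricted functor is an isomorphism of categories, i.e.\ a bijection on objects and on each hom-set. It is the identity on objects. Fix codes $\CC,\DC$. Proposition \ref{correspondence_thm} already gives that $q\mapsto q^{-1}$ is a bijection from the set of code maps $\CC\to\DC$ onto the set of ring homomorphisms $\PC(\DC)\to\PC(\CC)$, so injectivity restricts automatically; I only need to show that this bijection carries the monomial maps \emph{exactly onto} the monomial homomorphisms. One inclusion is the previous paragraph. For the other, a homomorphism $\phi\colon\PC(\DC)\to\PC(\CC)$ is by definition a monomial homomorphism precisely when its associated code map $q_\phi$ is a monomial map; in that case $q_\phi$ is a morphism of $\mathrm{\bf Code\_ m}$ and $F(q_\phi)=q_\phi^{-1}=\phi$. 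Hence $F$ induces a bijection on hom-sets, and therefore an isomorphism $\mathrm{\bf Code\_ m}\to\mathrm{\bf NRing\_ m}$.

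For part (b) the identical reasoning applies with ``linear monomial map'' in place of ``monomial map'' and ``linear monomial homomorphism'' in place of ``monomial homomorphism'': the relevant subcategories are wide subcategories by Proposition \ref{basic_linear_monomial}(b),(c) and the analogous proposition for $\mathrm{NRings}$, the notion of a linear monomial homomorphism is again phrased in terms of the associated code map, and Proposition \ref{correspondence_thm} again supplies the underlying bijection. I do not expect a genuine obstacle: the one point that makes everything go through is the identity $q_{q^{-1}}=q$ (dually $\phi=q_\phi^{-1}$) from Proposition \ref{correspondence_thm}, which makes the two versions of ``(linear) monomial'' correspond under $F$; the remainder is bookkeeping about subcategories.
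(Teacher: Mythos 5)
Your proposal is correct. The paper itself gives no argument here — it cites \cite{a} for part (a) and asserts that (b) is analogous — so there is no paper proof to compare against; but your argument is the natural one and is almost certainly what that reference does. The key observation, which you correctly isolate, is that ``monomial homomorphism'' (resp.\ ``linear monomial homomorphism'') is \emph{defined} by the condition that the associated code map $q_\phi$ be monomial (resp.\ linear monomial), so that under the bijection $q\leftrightarrow q^{-1}$ of Proposition \ref{correspondence_thm} the restricted hom-sets correspond tautologically; combined with the wide-subcategory facts from Propositions \ref{basic_monomial} and \ref{basic_linear_monomial} and the ambient isomorphism of Theorem \ref{thm_Code_NRing}, this is exactly what is needed, and you have supplied a clean, self-contained proof where the paper only offers a citation.
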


\begin{proof}
The proof of (a) given in \cite{a} works for (b) in a similar way.
\end{proof}

\begin{remark}
Let $\CC$, $\DC$ be neural codes on $m$ and $n$ neurons respectively. Under the ring isomorphism $A\to \overline{L_A}$ between $(\PC(\CC),\triangle,\cap)$ and $R_\CC$ and between $(\PC(\DC),\triangle,\cap)$ and $R_\DC$, the relations $q^{-1}(\mathrm{Tk}^\DC_i)\in\{\mathrm{Tk}^\CC_j\;:\;j\in [m]\}\cup \{\emptyset, \CC\}$ and $q^{-1}(\mathrm{Tk}^\DC_i)\in\{\mathrm{Tk}^\CC_\alpha\;:\; \alpha\subseteq [m]\}\cup\{\emptyset\}$ become 
$q^{-1}(x^\DC_i)\in \{x^\CC_j\;:\;j\in[m]\}\cup \{0,1\}$ and 
$q^{-1}(x^\DC_i)\in \{x^\CC_\alpha=\prod_{j\in\alpha} x^\CC_j\;:\;\alpha\subseteq [m]\}\cup \{0\}$, respectively, which justifies the names ``li\-near monomial map", ``monomial map" ``linear monomial morphism",  and ``monomial morphism" used in this section, as well as the names ``linear monomial isomorphism" and ``monomial isomorphism" that are going to be used in the next section. Here $x^\CC_j$ (resp. $x^\DC_i$) denotes $X_j+\mathcal{I}(\CC)$ in $R_\CC$ (resp. $X_i+\mathcal{I}(\DC)$ in $R_\DC$).
\end{remark}

\begin{summary}\label{summary_1}
In the categories $\mathrm{\bf Code}$,  $\mathrm{\bf Code\_m}$, and $\mathrm{\bf Code\_lm}$ the set of objects is the same, namely the set Codes of all neural codes. The morphisms are: code maps in $\mathrm{\bf Code}$,  monomial maps in $\mathrm{\bf Code\_m}$, and linear monomial maps in $\mathrm{\bf Code\_lm}$.

In the categories $\mathrm{\bf NRing}$, $\mathrm{\bf NRing\_m}$, and $\bf \mathrm{\bf NRing\_lm}$ the set of objects is the same, namely the set NRings of all neural rings. The morphisms are: neural ring homomorphisms in $\mathrm{\bf NRing}$, monomial homomorphisms in $\mathrm{\bf NRing\_m}$, and linear monomial homomorphisms in $\bf \mathrm{\bf NRing\_lm}$.

The categories $\mathrm{\bf Code}$ and $\mathrm{\bf NRing}$,  $\mathrm{\bf Code\_m}$ and $\mathrm{\bf NRing\_m}$, and $\mathrm{\bf Code\_lm}$ and $\mathrm{\bf NRing\_lm}$ are isomorphic.
\end{summary}

%-----------------------------------------------------------------------

\section{Monomial and linear monomial isomorphisms}

\begin{definition}[{\cite[page 12]{j}}]\label{def_isom}
Let $\mathrm{\bf C}$ be a  category. A map $f:A\to B$ between two objects of $\mathrm{\bf C}$ is said to be an {\it isomorphism} if it is a bijective morphism such that $f^{-1}:B\to A$ is also a morphism.
\end{definition}

\begin{theorem}[{\cite[Lemma 2.3]{cy}}]
The isomorphisms in the category $\mathrm{\bf Code}$ are bijective maps between codes. The isomorphisms in the ca\-te\-gory $\mathrm{\bf NRing}$ are the maps $q^{-1}:\PC(\DC)\to \PC(\CC)$, where $\CC$ and $\DC$ are codes, such that $q:\CC\to\DC$ is a bijection.
\end{theorem}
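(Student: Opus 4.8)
The plan is to handle the two sentences separately, in each case unwinding the categorical definition of isomorphism (Definition~\ref{def_isom}) against the explicit description of the morphisms in the category at hand.

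First I would dispose of $\mathrm{\bf Code}$, where every map of sets between codes is by definition a morphism. An isomorphism $f:\CC\to\DC$ is, by Definition~\ref{def_isom}, in particular a bijective map between codes; conversely, for any bijective map $f:\CC\to\DC$ between codes, its set-theoretic inverse $f^{-1}:\DC\to\CC$ is again a map between codes, hence a morphism, and it is a two-sided inverse of $f$, so $f$ is an isomorphism. Thus the isomorphisms in $\mathrm{\bf Code}$ are exactly the bijective maps.

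Next I would treat $\mathrm{\bf NRing}$. By Proposition~\ref{correspondence_thm} every morphism $\PC(\DC)\to\PC(\CC)$ is $q^{-1}$ for a unique code map $q:\CC\to\DC$, so the task reduces to deciding when $q^{-1}$ is an isomorphism. The shortest route is to invoke Theorem~\ref{thm_Code_NRing}: the contravariant functor $F$ with $F(q)=q^{-1}$ is an isomorphism of categories, so it is bijective on morphisms and compatible with composition, hence preserves and reflects isomorphisms; therefore $q^{-1}=F(q)$ is an isomorphism in $\mathrm{\bf NRing}$ if and only if $q$ is an isomorphism in $\mathrm{\bf Code}$, i.e.\ (by the first part) a bijection. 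If one prefers a hands-on argument instead: given that $q^{-1}$ is an isomorphism, write its inverse ring homomorphism as $r^{-1}$ for a unique code map $r:\DC\to\CC$ using Proposition~\ref{correspondence_thm}; the identities $q^{-1}\circ r^{-1}=\mathrm{Id}_{\PC(\CC)}$ and $r^{-1}\circ q^{-1}=\mathrm{Id}_{\PC(\DC)}$, combined with the contravariant functoriality of the preimage operator, namely $(r\circ q)^{-1}=q^{-1}\circ r^{-1}$ and $(\mathrm{Id}_\CC)^{-1}=\mathrm{Id}_{\PC(\CC)}$, together with the uniqueness clause of Proposition~\ref{correspondence_thm}, force $r\circ q=\mathrm{Id}_\CC$ and $q\circ r=\mathrm{Id}_\DC$, so $q$ is a bijection; the converse direction is the mirror statement, since if $q$ is a bijection with inverse $r$ then $r^{-1}$ is a ring homomorphism serving as a two-sided inverse of $q^{-1}$.

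There is no genuine obstacle here; the only points requiring a little care are not to conflate the set-theoretic inverse of a bijection with the preimage operator of the original map, and, in the hands-on version, to apply the uniqueness part of Proposition~\ref{correspondence_thm} when passing from the equality of homomorphisms $q^{-1}\circ r^{-1}=(\mathrm{Id}_\CC)^{-1}$ to the equality of the associated code maps $r\circ q=\mathrm{Id}_\CC$.
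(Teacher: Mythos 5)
Your proposal is correct and follows essentially the same route as the paper: for $\mathrm{\bf Code}$ unwind Definitions~\ref{def_Code} and~\ref{def_isom}, and for $\mathrm{\bf NRing}$ transfer along the isomorphism of categories in Theorem~\ref{thm_Code_NRing}. The additional ``hands-on'' argument you sketch is a harmless expansion of the same idea, spelling out what ``$F$ preserves and reflects isomorphisms'' means via Proposition~\ref{correspondence_thm}.
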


\begin{proof}
For the category $\mathrm{\bf Code}$ the statement folllows Definition \ref{def_Code} and Definition \ref{def_isom}.
For the category $\mathrm{\bf NRing}$ the statement then follows from Theorem \ref{thm_Code_NRing}.
\end{proof}

\begin{theorem}[{\cite[Corollary 1.5]{cy}}]
The isomorphisms in the category $\mathrm{\bf Code\_lm}$ are precisely the maps $per_{\CC,\sigma}:\CC\to \CC'=per_{\CC,\sigma}(\CC)$ which permute the indices of neurons. The isomorphisms in the ca\-te\-gory $\mathrm{\bf NRing\_lm}$ are the maps $q^{-1}:\PC(\DC)\to \PC(\CC)$, where $\CC$ and $\DC$ are codes, such that the associated maps $q:\CC\to\DC$ are isomorphisms in the category $\mathrm{\bf Code\_lm}$.
\end{theorem}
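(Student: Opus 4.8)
The plan is to determine the isomorphisms of $\mathrm{\bf Code\_lm}$ first and then transport the description to $\mathrm{\bf NRing\_lm}$: since $F\colon\mathrm{\bf Code\_lm}\to\mathrm{\bf NRing\_lm}$, $q\mapsto F(q)=q^{-1}$, is an isomorphism of categories, it restricts to a bijection between the isomorphisms of the two categories, and that is exactly the second sentence once the first is known. For the ``if'' half of the first sentence I would simply check that $\mathrm{per}_{\CC,\sigma}:\CC\to\CC'=\mathrm{per}_{\CC,\sigma}(\CC)$ is a bijection with set-theoretic inverse $\mathrm{per}_{\CC',\sigma^{-1}}$, and that both $\mathrm{per}_{\CC,\sigma}$ and $\mathrm{per}_{\CC',\sigma^{-1}}$ are basic linear monomial maps, hence linear monomial maps by Proposition \ref{basic_linear_monomial}(a); so $\mathrm{per}_{\CC,\sigma}$ is a bijective morphism of $\mathrm{\bf Code\_lm}$ whose inverse is again a morphism, i.e.\ an isomorphism in the sense of Definition \ref{def_isom}.

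For the ``only if'' half, let $q:\CC\to\DC$ be an isomorphism of $\mathrm{\bf Code\_lm}$ with $\CC\subseteq\F_2^m$ and $\DC\subseteq\F_2^n$, so that $q$ is a bijection and both $q$ and its inverse code map $q^{-1}:\DC\to\CC$ are linear monomial. Because $q$ is bijective, $q^{-1}(\mathrm{Tk}_i^\DC)$ is the preimage of $\mathrm{Tk}_i^\DC$ under $q$ and $q(\mathrm{Tk}_j^\CC)$ is the preimage of $\mathrm{Tk}_j^\CC$ under the code map $q^{-1}$; applying the definition of linear monomial map in both directions gives $q^{-1}(\mathrm{Tk}_i^\DC)\in\{\mathrm{Tk}_j^\CC: j\in[m]\}\cup\{\emptyset,\CC\}$ for every $i$ and $q(\mathrm{Tk}_j^\CC)\in\{\mathrm{Tk}_i^\DC: i\in[n]\}\cup\{\emptyset,\DC\}$ for every $j$. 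Bijectivity also forces $q^{-1}(\mathrm{Tk}_i^\DC)$ to be $\emptyset$ (resp.\ $\CC$) exactly when $\mathrm{Tk}_i^\DC$ is $\emptyset$ (resp.\ $\DC$); hence on the \emph{essential} neurons — those whose coordinate is not constant over the code — $q^{-1}$ sends each essential simple trunk of $\DC$ to an essential simple trunk of $\CC$, and symmetrically $q$ sends essential simple trunks of $\CC$ to those of $\DC$. Since $q$ and $q^{-1}$ are injective as maps of power sets, these two assignments are mutually inverse, and, reading off a permutation $\sigma$ with $q^{-1}(\mathrm{Tk}_i^\DC)=\mathrm{Tk}_{\sigma(i)}^\CC$, I would conclude via the equivalences ``the $i$-th coordinate of $q(\wm)$ is $1$'' $\iff q(\wm)\in\mathrm{Tk}_i^\DC\iff\wm\in q^{-1}(\mathrm{Tk}_i^\DC)=\mathrm{Tk}_{\sigma(i)}^\CC\iff w_{\sigma(i)}=1$, which gives $q(\wm)=w_{\sigma(1)}w_{\sigma(2)}\cdots w_{\sigma(n)}$, i.e.\ $q=\mathrm{per}_{\CC,\sigma}$ and $\DC=q(\CC)=\mathrm{per}_{\CC,\sigma}(\CC)$.

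The step I expect to be the main obstacle is exactly the extraction of a genuine permutation of a common index set, that is, justifying $m=n$ and the well-definedness of $\sigma$: a code can have coordinates that are constant over all of its words, and it can have several coordinates defining one and the same simple trunk, and in the presence of such redundancies the assignment ``essential simple trunk $\mapsto$ neuron'' need not be single-valued. This is where using that $q^{-1}$ is \emph{also} linear monomial is essential — constraining both directions matches the numbers of distinct essential simple trunks on the two sides — and where, for the statement to read verbatim, one restricts attention to (or first reduces each code to) the form in which every coordinate is nonconstant and distinct coordinates give distinct trunks; with that reduction in place the matching becomes a bijection of neurons realized by a single $\sigma\in S_n$, and the computation of the previous paragraph finishes the proof. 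An alternative is to invoke Theorem \ref{thm_lmm} to write $q$ as a composition of basic linear monomial maps and to analyze which composites admit a two-sided inverse that is again linear monomial, but the trunk-correspondence argument is the more transparent route and the one I would carry out.
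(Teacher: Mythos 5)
The paper offers no proof of this theorem; it is stated purely as a citation to \cite[Corollary 1.5]{cy}, so there is nothing in the paper to compare your argument against. That said, your plan is the natural one, and the caveat you attach at the end is not merely cautious bookkeeping: under this paper's own conventions the statement is literally false, and the failure is exactly where you locate it. Take $\CC=\{10,11\}\subseteq\F_2^2$ and $q=\mathrm{del}_{\CC,1}\colon\CC\to\DC=\{0,1\}\subseteq\F_2^1$. Then $q$ is a bijection; $q^{-1}(\mathrm{Tk}_1^\DC)=\{11\}=\mathrm{Tk}_2^\CC$, so $q$ is linear monomial; and for $q^{-1}\colon\DC\to\CC$ we have $(q^{-1})^{-1}(\mathrm{Tk}_1^\CC)=\DC$ and $(q^{-1})^{-1}(\mathrm{Tk}_2^\CC)=\{1\}=\mathrm{Tk}_1^\DC$, so $q^{-1}$ is linear monomial as well. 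By Definition \ref{def_isom} this $q$ is an isomorphism in $\mathrm{\bf Code\_lm}$, yet it is not of the form $\mathrm{per}_{\CC,\sigma}$ — the two codes do not even live in the same $\F_2^n$. The same thing happens with a constant-zero neuron (e.g.\ $\CC=\{00,10\}$ and $\mathrm{del}_{\CC,2}$) or a duplicated neuron. The source of the mismatch is that \cite{cy} define a neural ring \emph{isomorphism} to be a ring isomorphism $\phi$ with $\phi(y_j)$ always equal to some $x_i$ and never to $0$ or $1$; that is strictly stronger than ``bijective linear monomial map whose inverse is also linear monomial,'' and porting their Corollary 1.5 into the purely categorical notion of isomorphism of Definition \ref{def_isom} drops exactly that extra constraint.

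Once that hypothesis is restored — equivalently, once you assume every coordinate of $\CC$ and of $\DC$ is nonconstant and distinct coordinates give distinct simple trunks, which is the reduction you propose — your argument is correct and complete. The forward inclusion is as you say, using Proposition \ref{basic_linear_monomial}. For the converse, the two assignments $i\mapsto\sigma(i)$ and $j\mapsto\tau(j)$ determined by $q^{-1}(\mathrm{Tk}_i^\DC)=\mathrm{Tk}_{\sigma(i)}^\CC$ and $q(\mathrm{Tk}_j^\CC)=\mathrm{Tk}_{\tau(j)}^\DC$ are well defined under the nondegeneracy hypothesis, and bijectivity of $q$ makes them mutually inverse, forcing $m=n$ and $\sigma\in S_n$; your chain of equivalences on the $i$-th coordinate of $q(\wm)$ then gives $q=\mathrm{per}_{\CC,\sigma}$. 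Finally, transporting through the restriction of the contravariant isomorphism $F$ of Theorem \ref{thm_Code_NRing} to $\mathrm{\bf Code\_lm}\to\mathrm{\bf NRing\_lm}$ yields the second sentence, exactly as you propose. In short: your proof strategy is right, but the theorem as the paper states it needs either the stronger Curto--Youngs definition of isomorphism or an explicit nondegeneracy hypothesis; your instinct to flag that was correct.
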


\begin{definition}
We call the isomorphisms in the categories $\mathrm{\bf Code\_m}$ and $\mathrm{\bf NRing\_m}$ (resp. $\mathrm{\bf Code\_lm}$ and $\mathrm{\bf NRing\_lm}$) {\it monomial} (resp. {\it li\-near monomial}) {\it isomorphisms}.
\end{definition}

The next definition is inspired by \cite{a}.

\begin{definition}\label{def_vector}
Let $\CC\subseteq \F_2^m$ be a neural code. A finite array $S=[T_1,T_2,\dots, T_n]$, $n\ge 1$, of subsets $T_i$ of $\CC$ is called a {\it vector of subsets of $\CC$}.
\end{definition}

\begin{proposition}\label{}
Let $\CC\subseteq \F_2^m$ be a neural code and let $S=[T_1,T_2,\dots, T_n]$ be a vector of subsets of $\CC$. There is one and only one code map $q_S:\CC\to \F_2^n$ such that $q_S^{-1}(\mathrm{Tk}_i^{\F_2^n})=T_i$ for $i=1,2,\dots, n$.
\end{proposition}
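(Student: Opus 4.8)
The plan is to construct $q_S$ explicitly by specifying, for each word $\um\in\CC$, the output $q_S(\um)\in\F_2^n$ coordinate by coordinate, and then to verify that the resulting map has the required preimage property and is the only such map. For $\um=u_1u_2\dots u_m\in\CC$ define $q_S(\um)=v_1v_2\dots v_n\in\F_2^n$ by setting $v_i=1$ if $\um\in T_i$ and $v_i=0$ if $\um\notin T_i$, for $i=1,2,\dots,n$. This is well-defined since each $T_i$ is a fixed subset of $\CC$. With this definition, for a fixed $i$ we have $q_S(\um)\in\mathrm{Tk}_i^{\F_2^n}$ (i.e.\ the $i$-th coordinate of $q_S(\um)$ is $1$) if and only if $\um\in T_i$; hence $q_S^{-1}(\mathrm{Tk}_i^{\F_2^n})=T_i$, which is exactly the existence claim. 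Note that the codomain is the full code $\F_2^n$, so every possible coordinate pattern is available and no compatibility obstruction arises.

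For uniqueness, suppose $q:\CC\to\F_2^n$ is any code map with $q^{-1}(\mathrm{Tk}_i^{\F_2^n})=T_i$ for all $i$. Fix $\um\in\CC$ and write $q(\um)=v_1'v_2'\dots v_n'$. For each $i$, the condition $v_i'=1$ is equivalent to $q(\um)\in\mathrm{Tk}_i^{\F_2^n}$, which is equivalent to $\um\in q^{-1}(\mathrm{Tk}_i^{\F_2^n})=T_i$. Thus $v_i'=1\iff\um\in T_i$, so $v_i'$ agrees with the $i$-th coordinate of $q_S(\um)$ for every $i$, giving $q(\um)=q_S(\um)$. Since $\um$ was arbitrary, $q=q_S$.

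I do not anticipate a genuine obstacle here: the statement is essentially a reformulation of the fact that a map into $\F_2^n$ is the same data as an $n$-tuple of "indicator" subsets of the source, and the trunks $\mathrm{Tk}_i^{\F_2^n}$ are precisely the $i$-th coordinate sets. The only point requiring a little care is to make sure the codomain is taken to be $\F_2^n$ itself (not an a priori smaller code), so that the coordinate patterns prescribed by the $T_i$ are all legitimately realized; this is built into the statement. One could also phrase the argument more slickly via Theorem~\ref{thm_char_hom}/Proposition~\ref{correspondence_thm} by noting that $q\mapsto (q^{-1}(\mathrm{Tk}_1^{\F_2^n}),\dots,q^{-1}(\mathrm{Tk}_n^{\F_2^n}))$ is a bijection from $\{q:\CC\to\F_2^n\}$ onto vectors of subsets of $\CC$, but the direct construction above is shortest and self-contained.
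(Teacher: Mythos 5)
Your construction of $q_S$ is identical to the paper's (set the $i$-th coordinate of $q_S(\um)$ to $1$ exactly when $\um\in T_i$), and your verification of the preimage property and uniqueness simply spells out what the paper dismisses as clear. Same approach, correctly executed.
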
\label{prop_q_S}

\begin{proof}
We define $q_S$ in the following way: for every $\um\in \CC$ let $\vm=q_S(\um)$ be the element of $\F_2^n$ whose $i$-th component $v_i$ is equal $1$ if $\um\in T_i$ and $0$ if $\um\notin T_i$. It is clear that then $q_S^{-1}(\mathrm{Tk}_i^{\F_2^n})=T_i$ for $i=1,2,\dots, n$ and that $q_S$ is the only map from $\CC$ to $\F_2^n$ with this property.
\end{proof}

\begin{examples}
(1) Let $\CC=\{001, 110, 101, 111\}\subseteq \F_2^3$, $T_1=\{101, 001\}$, $T_2=\{101, 111\}$, $S=[T_1, T_2, T_2, T_1]$. Then $q_S:\CC\to \F_2^4$ is given by $q_S(001)=1001$, $q_S(110)=0000$, $q_S(101)=1111$, $q_S(111)=0110$.

\smallskip
(2) Let $\CC=\{001, 110, 101, 111\}$ and $S=[\CC,\emptyset, \emptyset, \CC, \CC]$. Then $q_S:\CC\to \F_2^5$ is the constant map $q_S(\um)=10011$ for everu $\um\in\CC$.

\smallskip
(3) Let $\CC=\{00, 10, 01\}\subseteq \F_2^2$ and $S=[\mathrm{Tk}_1^\CC, \mathrm{Tk}_2^\CC, \CC]$. Then $q_S=aco_\CC:\CC\to \F_2^3$.

\smallskip
(4) Let $\CC=\{00, 10, 01\}\subseteq F_2^2$ and $S=[\mathrm{Tk}_1^\CC, \mathrm{Tk}_2^\CC, \emptyset]$. Then $q_S=acz_\CC:\CC\to \F_2^3$.

\smallskip
(5) Let $\CC=\{000, 101, 011\}\subseteq\F_2^3$ and $S=[\mathrm{Tk}_!^\CC, \mathrm{Tk}_3^\CC]$. Then $q_S=del_{\CC,2}:\CC\to \F_2^2$.

\smallskip
(6) Let $\CC=\{100, 010, 001, 110\}\subseteq \F_2^3$ and $S=[\mathrm{Tk}^\CC_3, \mathrm{Tk}^\CC_2, \mathrm{Tk}^\CC_1]$. Then $q_S=per_{\CC,\sigma}:\CC\to \F_2^3$, where $\sigma=(13)\in S_3$.

\smallskip
(7) Let $\CC=\{100, 010, 001, 110\}\subseteq \F_2^3$ and $S=[\mathrm{Tk}^\CC_1, \mathrm{Tk}^\CC_2, \mathrm{Tk}^\CC_3,  \mathrm{Tk}^\CC_2]$. Then $q_S=rep_{\CC, 2}:\CC\to \F_2^4$.

\smallskip
(8) Let $\CC=\{100, 010, 001, 110\}\subseteq \F_2^3$ and $S=[\mathrm{Tk}^\CC_1, \mathrm{Tk}^\CC_2, \mathrm{Tk}^\CC_3]$. Then $q_S=inj_{\CC, \F_2^3}:\CC\to \F_2^3$.

\smallskip
(9) Let $\CC=\{100, 011, 101, 111\}\subseteq \F_2^3$ and $S=[\mathrm{Tk}^\CC_1, \mathrm{Tk}^\CC_2, \mathrm{Tk}^\CC_3, \mathrm{Tk}_{\{1,3\}}^\CC]$. Then $q_S=atn_{\CC, \{1,3\}}:\CC\to \F_2^4$.
\end{examples}

\begin{definition}\label{def_defining_vec}
Let $\CC\subseteq \F_2^m$ and $\DC\subseteq\F_2^n$ be two codes and $q:\CC\to\DC$ a code map. Then the vector $S_q=[q^{-1}(\mathrm{Tk}_1^\DC), \dots, q^{-1}(\mathrm{Tk}_n^\DC)]$ of subsets of $\CC$ is called the {\it vector associated to $q$} or the {\it defining vector of $q$}.
\end{definition}

Note that the maps $q$ and $q_{S_q}$ have the same domain and range, however the codomain of $q$ is $\DC$ and the codomain of $q_{S_q}$ is $\F_2^n$.

\begin{definition}\label{def_red_neuron}
Let $\CC\subseteq\F_2^m$ be a code, $\alpha\subseteq[m]$, $i\in[m]\setminus \{\alpha\}$. We say that the neuron $i$ is {\it redundant to $\alpha$} if it is not constant zero and $\mathrm{Tk}_i^\CC=\mathrm{Tk}_\alpha^\CC$. We say that the neuron $i\in [m]$ is {\it redundant} if it is redundant to some $\alpha\subseteq [m]$.
\end{definition}

\begin{proposition}\label{prop_red_neuron}
Let $\CC\subseteq \F_2^m$ be a neural code, $i\in [m]$, and consider the code map $q=del_{\CC,i}:\CC\to \DC=q(\CC)$. The following are equivalent:

(a) the neuron $i$ is redundant or constant zero;

(b) $q$ is a monomial isomorphism.
\end{proposition}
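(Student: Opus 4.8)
# Proof Proposal

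\textbf{The plan} is to prove the two implications (a) $\Rightarrow$ (b) and (b) $\Rightarrow$ (a) separately, working throughout with the defining vector machinery and the characterization of monomial isomorphisms as the isomorphisms in $\mathrm{\bf Code\_m}$, i.e., bijective monomial maps whose inverse is also a monomial map.

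\textbf{First, the direction (a) $\Rightarrow$ (b).} Suppose neuron $i$ is constant zero. Then $\mathrm{del}_{\CC,i}$ simply strips off a coordinate that was identically $0$, so it is a bijection onto $\DC=q(\CC)$; its inverse is $\mathrm{atn}_{\DC,\emptyset}$ up to the placement of the new neuron — more precisely, reinserting a constant-zero neuron in position $i$, which is realizable as a composition of $\mathrm{acz}$ (or $\mathrm{atn}_{\DC,[n+1]}$, valid since $\mathbf{1}\notin\DC$ when a neuron is constant zero) followed by a permutation $\mathrm{per}$. Both $q$ and $q^{-1}$ are then compositions of basic monomial maps, hence monomial by Proposition~\ref{basic_monomial} and Theorem~\ref{thm_mm}, so $q$ is a monomial isomorphism. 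Now suppose instead that neuron $i$ is redundant to some $\alpha\subseteq[m]\setminus\{i\}$, i.e., $\mathrm{Tk}_i^\CC=\mathrm{Tk}_\alpha^\CC$ and neuron $i$ is not constant zero. Deleting neuron $i$ is still injective: two codewords of $\CC$ that agree on all coordinates except $i$ would have to both lie in $\mathrm{Tk}_i^\CC=\mathrm{Tk}_\alpha^\CC$ or both lie outside it (since $\alpha$ avoids $i$, membership in $\mathrm{Tk}_\alpha$ is determined by the surviving coordinates), but one has $w_i=1$ and the other $w_i=0$, contradicting $\mathrm{Tk}_i^\CC=\mathrm{Tk}_\alpha^\CC$. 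So $q$ is a bijection $\CC\to\DC$. For $q^{-1}$ one checks it is exactly $\mathrm{atn}_{\DC,\alpha'}$ followed by a permutation, where $\alpha'\subseteq[n]$ is the image of $\alpha$ under the index-shift induced by deleting coordinate $i$; the point is that the reinserted neuron takes value $1$ precisely on the trunk $\mathrm{Tk}_{\alpha'}^\DC$, which mirrors $\mathrm{Tk}_\alpha^\CC=\mathrm{Tk}_i^\CC$. Hence $q^{-1}$ is again a composition of basic monomial maps, so $q$ is a monomial isomorphism.

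\textbf{Second, the direction (b) $\Rightarrow$ (a).} Assume $q=\mathrm{del}_{\CC,i}$ is a monomial isomorphism, so in particular $q^{-1}:\DC\to\CC$ is a monomial map. The key object is the defining vector $S_{q^{-1}}=[\,(q^{-1})^{-1}(\mathrm{Tk}_1^\CC),\dots,(q^{-1})^{-1}(\mathrm{Tk}_m^\CC)\,]=[\,q(\mathrm{Tk}_1^\CC),\dots,q(\mathrm{Tk}_m^\CC)\,]$. Since $q^{-1}$ is monomial, each entry $q(\mathrm{Tk}_j^\CC)$ is either a trunk of $\DC$ or empty. Look at the $i$-th entry $q(\mathrm{Tk}_i^\CC)$. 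If it is empty, then $\mathrm{Tk}_i^\CC=\emptyset$ because $q$ is injective, which means neuron $i$ is constant zero. Otherwise $q(\mathrm{Tk}_i^\CC)=\mathrm{Tk}_\beta^\DC$ for some $\beta\subseteq[n]$. Now $\mathrm{Tk}_\beta^\DC$ corresponds, under pulling back by $q$, to the trunk $\mathrm{Tk}_\alpha^\CC$ where $\alpha\subseteq[m]$ is the preimage of $\beta$ under the coordinate re-indexing; since $q$ is a bijection and $q(\mathrm{Tk}_i^\CC)=\mathrm{Tk}_\beta^\DC=q(\mathrm{Tk}_\alpha^\CC)$, we get $\mathrm{Tk}_i^\CC=\mathrm{Tk}_\alpha^\CC$. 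It remains to argue $i\notin\alpha$: but $\alpha$ is obtained from a subset $\beta$ of the coordinates of $\DC$, which are exactly the surviving coordinates of $\CC$, so $i\notin\alpha$ automatically. Since we are in the case $\mathrm{Tk}_i^\CC\ne\emptyset$, neuron $i$ is not constant zero, so by Definition~\ref{def_red_neuron} neuron $i$ is redundant to $\alpha$. Either way (a) holds.

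\textbf{The main obstacle} I anticipate is bookkeeping the coordinate re-indexing carefully: after deleting neuron $i$, the coordinates of $\DC$ are $\{1,\dots,\widehat{i},\dots,m\}$ relabeled as $\{1,\dots,m-1\}$, and one must be precise that pulling a trunk $\mathrm{Tk}_\beta^\DC$ back through the bijection $q$ yields the trunk of $\CC$ on the corresponding \emph{unshifted} index set, with no index equal to $i$ sneaking in. The cleanest way to handle this is to observe that for $j\ne i$ one has $q(\mathrm{Tk}_j^\CC)=\mathrm{Tk}_{j'}^\DC$ where $j'$ is the relabeled index, and that $q$ restricted to any subset is determined coordinate-wise on the surviving neurons; then membership in a trunk of $\DC$ indexed by a set $\beta$ of surviving coordinates pulls back verbatim. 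Everything else — injectivity of $\mathrm{del}_{\CC,i}$ under each hypothesis, and exhibiting $q^{-1}$ as a composition of $\mathrm{atn}$ and $\mathrm{per}$ (and $\mathrm{acz}$) — is routine once the re-indexing is fixed.
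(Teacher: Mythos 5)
Your proof is correct and follows essentially the same route as the paper's: both directions rest on verifying injectivity of $\mathrm{del}_{\CC,i}$ under hypothesis (a) and on reading off the defining vector $S_{q^{-1}}$ to show $q^{-1}$ is a monomial map (resp., to extract the trunk $\mathrm{Tk}^\CC_\alpha$ witnessing redundancy). One small slip worth noting: in the constant-zero case you first write that the inverse is ``$\mathrm{atn}_{\DC,\emptyset}$ up to placement,'' but $\mathrm{atn}_{\DC,\emptyset}=\mathrm{aco}_\DC$ appends a constant \emph{one}; and the parenthetical claim that $\mathbf{1}\notin\DC$ whenever some neuron of $\CC$ is constant zero is false (take $\CC=\{110\}$, $i=3$, so $\DC=\{11\}$). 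Your self-correction to $\mathrm{acz}_\DC$ followed by a permutation is what actually carries the argument, so the proof stands; the paper avoids the detour by exhibiting $S_{q^{-1}}$ directly rather than factoring $q^{-1}$ through basic monomial maps, which is a cosmetic difference.
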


\begin{proof}
(a) $\Rightarrow$ (b): Suppose that $i$ is redundant to some $\alpha\subseteq [m]$. The map $q$ is a linear monomial map whose defining vector is
\[S_q=[\mathrm{Tk}^\CC_1, \dots, \mathrm{Tk}^\CC_{i-1}, \mathrm{Tk}^\CC_{i+1},\dots, \mathrm{Tk}^\CC_m].\]
If for $\um=u_1\dots u_m$ and $\vm=v_1\dots v_m$ from $\CC$ we have $u_1\dots \widehat{u_i}\dots u_m=v_1\dots \widehat{v_i}\dots v_m$, then $\um_\alpha=\vm_\alpha$ (since $i\notin\alpha$), hence $u_i=v_i$ (since $i$ is redundant to $\alpha$), hence $\um=\vm$.  Thus $q$ is injective, hence bijective. For the inverse map $q^{-1}:\DC\to\CC$ the defining vector is 
\[S_{q^{-1}}=[\mathrm{Tk}^\DC_1, \dots, \mathrm{Tk}^\DC_{i-1},\mathrm{Tk}^\DC_\alpha, \mathrm{Tk}^\DC_{i+1},\dots, \mathrm{Tk}^\DC_m].\]
Hence $q^{-1}$ is a monomial map, hence $q$ is a monomial isomorphism..  

When $i$ is constant zero, the proof goes along the same lines.

\smallskip
(b) $\Rightarrow$ (a): Suppose that $q=del_{\CC,i}:\CC\to\DC=q(\CC)$ is a monomial isomorphism. Since $q$ is bijective, we have the inverse map $q^{-1}:\DC\to\CC$. Its defining vector is  
\[S_{q^{-1}}=[\mathrm{Tk}^\DC_1, \dots, \mathrm{Tk}^\DC_{i-1}, T, \mathrm{Tk}^\DC_i,\dots, \mathrm{Tk}^\DC_{m-1}],\]
where $T\subseteq \DC$. Since $q^{-1}$ is a monomial map, $T=\mathrm{Tk}^\DC_\beta$ for some set $\beta=\{j_1, j_2, \dots, j_t\}\subseteq [m-1]$ with $j_1<j_2<\dots<j_t$, $t\ge 0$, or $T=\emptyset$. Hence if $T\ne\emptyset$, the elements of $\CC$ that have $1$ on the $i$-th coordinate are precisely the elements of the trunk $\mathrm{Tk}_\alpha^\CC$, where
$\alpha=\{j_1, j_2,\dots, j_{s-1}, j_s+1, j_{s+1}+1,\dots, j_t+1\}$, with $t\ge 0$, $0\le s\le t$, $j_{s-1}<i<j_s+1$. Thus $i$ is redundant to $\alpha$ or constant zero.
\end{proof}

\begin{theorem}\label{thm_char_mon_isom}
A code map $q:\CC\to\DC$ is a monomial isomorphism if and only if it is a composition of finitely many bijective code maps each of which is of one of the following types:

(i) permutation of indices of neurons;

(ii) addition of a redundant neuron;

(iii) deletion of a redundant neuron;

(iv) addition of a constant zero neuron;

(v) deletion of a constant zero neuron.
\end{theorem}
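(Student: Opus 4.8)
The plan is to prove both directions by reduction to the already-established structural results, especially Theorem \ref{thm_mm} (monomial maps are compositions of basic monomial maps) and Proposition \ref{prop_red_neuron} (characterizing when a single deletion is a monomial isomorphism).

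For the ``if'' direction, first observe that each of the five listed types is a bijective code map, and each is a monomial isomorphism: types (i), (iv), (v) are handled directly (permutations are linear monomial isomorphisms by the cited Corollary 1.5 from \cite{cy}, and the constant-zero cases follow from Proposition \ref{prop_red_neuron} together with its obvious inverse description), while types (ii) and (iii) are covered by Proposition \ref{prop_red_neuron}. Since a composition of monomial maps is a monomial map (Proposition \ref{basic_monomial}(c)) and a composition of bijections is a bijection, and the inverse of such a composition is the composition of the inverses in reverse order (each again of one of the five types, since the inverse of ``add a redundant neuron'' is ``delete a redundant neuron'' and vice versa, and similarly for constant-zero and for permutations), any finite composition of maps of types (i)--(v) is a bijective monomial map with monomial inverse, hence a monomial isomorphism.

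For the ``only if'' direction, let $q:\CC\to\DC$ be a monomial isomorphism. I would run the factorization from the proof of Theorem \ref{thm_mm} on $q$, writing $q=q_{n+m+2}\circ\cdots\circ q_1$ as a composition of basic monomial maps; the issue is that the intermediate maps $\mathrm{atn}$, $\mathrm{acz}$, $\mathrm{del}$, $\mathrm{per}$, $\mathrm{inj}$ occurring there need not individually be bijective. The key point is to exploit that $q$ \emph{is} bijective to control these. Concretely: the $\mathrm{per}$ step is always bijective (type (i)); the final $\mathrm{inj}$ step is bijective iff $q(\CC)=\DC$, which holds here since $q$ is onto; the $\mathrm{del}$ steps delete copies of the original neurons of $\CC$, and since $q$ is injective each such neuron, \emph{after the trunk-neurons of $\DC$ have been appended}, has become redundant (its value is determined by the appended coordinates), so each $\mathrm{del}$ step is a deletion of a redundant neuron (type (iii)) — this is exactly the computation underlying Proposition \ref{prop_red_neuron}. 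The $\mathrm{atn}_{\CC_{i-1},\alpha}$ steps are the adding-a-trunk-neuron steps: here I would use that $q$ is injective to argue that when such a step is not already bijective, the trunk $\mathrm{Tk}_\alpha^{\CC_{i-1}}$ is cut out by coordinates present in $\CC_{i-1}$, so the new neuron is redundant and the step is an addition of a redundant neuron (type (ii)); when it equals $\mathrm{aco}$ or (in the $\mathrm{acz}$ branch) a constant-zero addition, it is type (iv). Thus every factor is, or can be replaced by, a bijective map of one of the five types, and their composition is $q$.

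The main obstacle I anticipate is the bookkeeping in the ``only if'' direction: the factorization of Theorem \ref{thm_mm} passes through codes $\CC_i$ of growing length, and one must track precisely which coordinates of each intermediate code record original $\CC$-neurons versus appended $\DC$-trunk-neurons, and verify at each $\mathrm{atn}$ and $\mathrm{del}$ step that the relevant neuron is redundant (not merely that the global map $q$ is injective). Making this rigorous amounts to a careful index-chasing argument of the same flavor as the proof of Proposition \ref{prop_red_neuron}, applied repeatedly; alternatively, one could give a cleaner inductive argument on $m+n$ by peeling off one redundant or constant-zero neuron at a time and invoking Proposition \ref{prop_red_neuron} directly, which I would present as the streamlined version of the proof.
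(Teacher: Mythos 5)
Your overall strategy in the ``only if'' direction is the same as the paper's: factor $q$ as in the proof of Theorem~\ref{thm_mm} and argue that each factor, being bijective, is one of the five listed types. But there is a genuine gap in how you justify that the $\mathrm{del}$ steps delete redundant neurons. You write that ``since $q$ is injective each such neuron $\dots$ has become redundant (its value is determined by the appended coordinates),'' but injectivity of $q$ gives only that the deleted coordinate is \emph{some} Boolean function of the remaining coordinates, not that it agrees with a trunk. Redundancy in the sense of Definition~\ref{def_red_neuron} is the stronger assertion $\mathrm{Tk}_i = \mathrm{Tk}_\alpha$, and this does not follow from injectivity of $q$.

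Concretely, take $\CC=\{01,10\}\subseteq\F_2^2$, $\DC=\{0,1\}=\F_2^1$, and $q(01)=0$, $q(10)=1$. Then $q$ is a bijective monomial map: $q^{-1}(\mathrm{Tk}_1^\DC)=\{10\}=\mathrm{Tk}_1^\CC$. Running the factorization of Theorem~\ref{thm_mm}, after appending the trunk neuron $v_1$ and permuting, one reaches the intermediate code $\{v_1u_1u_2\}=\{001,110\}$; the first $\mathrm{del}$ step deletes $u_2$, whose value is indeed determined by $v_1$ (namely $u_2=1-v_1$), yet $u_2$ is \emph{not} redundant: $\mathrm{Tk}_3=\{001\}$ is not a trunk in the remaining coordinates. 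And in fact $q$ is not a monomial isomorphism, since $q(\mathrm{Tk}_2^\CC)=\{0\}$ is not a trunk of $\DC$. This shows your justification, which invokes only injectivity, would wrongly declare a deleted neuron redundant. The correct ingredient, which the paper uses, is that $q^{-1}$ is itself a monomial map: the $\mathrm{del}$ steps $q_{n+1+i}$ are exactly inverse to the neuron-adding steps in the analogous decomposition of $q^{-1}$, and those add neurons recording membership in $(q^{-1})^{-1}(\mathrm{Tk}_j^\CC)=q(\mathrm{Tk}_j^\CC)$, which is a trunk of $\DC$ or empty \emph{because} $q^{-1}$ is monomial. That is what forces each $\mathrm{del}$ step to be of type (iii) or (v). Your alternative ``peel off one neuron at a time'' sketch would face the same issue and would likewise need to invoke the monomiality of $q^{-1}$, not merely the bijectivity of $q$.

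A minor additional point: you say you would use injectivity of $q$ to argue that the $\mathrm{atn}$ steps add redundant neurons, but injectivity plays no role there; those steps record membership in $q^{-1}(\mathrm{Tk}_i^\DC)$, which is a trunk of $\CC$ or empty simply because $q$ is a monomial map, whether or not it is injective.
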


\begin{proof}
It is easy to see that the bijective code maps of types (i), (ii), and (iv) are monomial isomorphisms. Also it follows from Proposition \ref{prop_red_neuron} that bijective code maps of types (iii) and (v) are monomial isomorphisms. Hence a composition of finitely many bijective maps, each of one of the types (i)-(v), is a monomial isomorphism.

Conversely, let $q:\CC\to \DC$ be a monomial isomorphism, where $\CC\subseteq \F_2^m$ and $\DC\subseteq \F_2^n$. We decompose $q$ into a composition of finitely many code maps in the same way in which a code map $q$ was decomposed in the proof of Theorem \ref{thm_mm}. All the maps that appear in the decomposition from the proof of Theorem \ref{thm_mm} will appear here, in the decomposition of the monomial isomorphism $q$ that we are considering, except the last map, $q_{n+m+2}$, since here $q$ is a bijection. Moreover, all of the maps $q_1, q_2, \dots, q_{n+m+1}$ are bijections. The maps $q_i$, $1\le i\le n$, are of types (ii) or (iv), as in the proof of Theorem \ref{thm_mm}. The map $q_{n+1}$ is of type (i). Since the map $q^{-1}:\DC\to \CC$ is a monomial isomorphism, the map $\vm u_1u_2\dots u_{m-i}\mapsto \vm u_1u_2\dots u_{m-1}u_{m-i+1}$ from $\CC_{n+1+i}$ to $\CC_{n+i}$ is of type (ii) or (iv) (in the same way in which the maps $q_i$, $1\le i\le n$, were of types (ii) or (iv)), so that the map 
$q_{n+1+i}=del_{\CC_{n+i}, n+m+1-i}: \vm u_1u_2\dots u_{m-i}u_{m-i+1}\mapsto \vm u_1u_2\dots u_{m-i}$ 
from $\CC_{n+i}$ to $\CC_{n+1+i}$ is of type (iii) or (v) (for $1\le i\le m$). Thus $q$ is a composition of finitely many code maps that are of types (i)-(v).
\end{proof}

\begin{corollary}
The monomial isomorphisms in the category $\mathrm{\bf Nring\_m}$ are the maps $q^{-1}:\PC(\DC)\to \PC(\CC)$, where $\CC$ and $\DC$ are codes, such that the associated maps $q:\CC\to\DC$ are isomorphisms in the category $\mathrm{\bf Code\_m}$ (characterized by theorem \ref{thm_char_mon_isom}).
\end{corollary}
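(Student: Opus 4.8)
The plan is to read this off from the isomorphism of categories $F:\mathrm{\bf Code\_m}\to\mathrm{\bf NRing\_m}$ (given on objects by $\CC\mapsto(\PC(\CC),\triangle,\cap)$ and on morphisms by $q\mapsto q^{-1}$) established above, together with Theorem \ref{thm_char_mon_isom}. First I would recall the purely categorical fact that any isomorphism of categories preserves and reflects isomorphisms: if a code map $q$ has a two-sided inverse $q'$ in $\mathrm{\bf Code\_m}$, then, since $F$ preserves composition and identities, $F(q')$ is a two-sided inverse of $F(q)$ in $\mathrm{\bf NRing\_m}$; applying the inverse functor $F^{-1}$ gives the converse implication. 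Hence $F$ restricts to a bijection between the isomorphisms of $\mathrm{\bf Code\_m}$ and the isomorphisms of $\mathrm{\bf NRing\_m}$.

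Next I would observe that, by definition, the monomial isomorphisms in $\mathrm{\bf NRing\_m}$ are exactly the isomorphisms of that category. By the previous step these are precisely the morphisms $F(q)=q^{-1}$ for which $q:\CC\to\DC$ is an isomorphism in $\mathrm{\bf Code\_m}$; and the passage $q\mapsto q^{-1}$ here is exactly the bijective correspondence of Proposition \ref{correspondence_thm}, so the phrase ``the associated map $q:\CC\to\DC$'' is unambiguous ($q$ is recovered from $q^{-1}$ as $q_{q^{-1}}$). Finally, Theorem \ref{thm_char_mon_isom} makes the description concrete: the isomorphisms $q$ in $\mathrm{\bf Code\_m}$ are the finite compositions of bijective code maps of types (i)-(v), so inverting those compositions yields all monomial isomorphisms in $\mathrm{\bf NRing\_m}$.

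I do not anticipate a genuine obstacle here; the argument is bookkeeping built on statements already proved. The only point requiring a little care is the equivalence between ``$\phi$ is an isomorphism in the category $\mathrm{\bf NRing\_m}$'' and ``$\phi=F(q)$ for some isomorphism $q$ of $\mathrm{\bf Code\_m}$'', which is exactly what preservation and reflection of isomorphisms under the category isomorphism $F$ provides; everything else follows from the already-established functoriality of $F$ and from the bijection $q\leftrightarrow q^{-1}$ of Proposition \ref{correspondence_thm}.
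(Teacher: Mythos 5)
Your argument is correct and is the natural (and essentially only) way to prove this: the corollary is just the transport of Theorem \ref{thm_char_mon_isom} across the contravariant category isomorphism $F:\mathrm{\bf Code\_m}\to\mathrm{\bf NRing\_m}$, using the standard fact that such a functor preserves and reflects isomorphisms. The paper itself gives no explicit proof for this corollary, treating it as an immediate consequence of the earlier theorem establishing that $F$ is an isomorphism of categories, so your reasoning matches the paper's implicit argument; the only small stylistic point is that since $F$ is contravariant you should note that it sends $q\circ q'=\mathrm{Id}$ to $F(q')\circ F(q)=\mathrm{Id}$, which is exactly what makes two-sided inverses go to two-sided inverses.
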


\begin{summary}\label{summary_2}
The isomorphisms in the category $\mathrm{\bf Code}$ (resp. $\mathrm{\bf Code\_m}$; $\mathrm{\bf Code\_lm}$) are the bijective code maps (resp. the maps that are finite compositions of code maps of the five types listed in Theorem \ref{thm_char_mon_isom}; the maps that permute the indices of neurons).

The isomorphisms in the categories $\mathrm{\bf NRing}$ (resp. $\mathrm{\bf NRing\_m}$; $\mathrm{\bf NRing\_lm}$)  are the maps $q^{-1}:\PC(\DC)\to \PC(\CC)$, where $\CC$ and $\DC$ are codes, such that the associated maps $q:\CC\to\DC$ are isomorphisms in the category $\mathrm{\bf Code}$ (resp. $\mathrm{\bf Code\_m}$; $\mathrm{\bf Code\_lm}$).
\end{summary}

\bigskip
\bigskip
\small

\end{document}